\let\c@author\relax
\newcommand{\R}{{\mathbb{R}}}
\newcommand{\rn}{{\mathbb{R}^N}}
\newcommand{\Rm}{{\mathbb{R}^m}}
\newcommand{\N}{{\mathbb{N}}}
\newcommand{\betapow}{{\beta}}
\newcommand{\ve}{{\varepsilon}}
\newcommand{\vk}{{\varkappa}}
\newcommand{\Cb}{{C_\sigma}}
\newcommand{\Ca}{{C_a}}
\newcommand{\Mbound}{{K}}
\def\d{{\,{\rm d}}}
\newcommand{\Cnum}{{C}}
\newcommand{\tildec}{{\tilde c}}
\newcommand{\bu}{{f}}
\newcommand{\Uset}{{U}}
\newcommand{\bfun}{{\sigma}}
\newcommand{\wfun}{{\omega}}
\newcommand{\Aform}{{\Lambda}}
\newcommand{\Omprim}{{\Omega'}}
\newcommand{\ai}{{s}}
\newcommand{\aii}{{s}}
\newcommand{\bii}{{\tau}}
\newcommand{\cZ}{{\mathcal{Z}}}
\newcommand{\cF}{{\mathcal{F}}}
\newcommand{{\SP}}{{\mathcal{Z}}}
\newcommand{\dnum}{{d}}
\newcommand{\anyopen}{{U}}
\newcommand{\Ai}{{\lambda}}
\DeclareMathOperator{\supp}{supp}
\DeclareMathOperator{\dist}{dist}
\DeclareMathOperator{\diam}{diam}
\def\Xint#1{\mathchoice
{\XXint\displaystyle\textstyle{#1}}%
{\XXint\textstyle\scriptstyle{#1}}%
{\XXint\scriptstyle\scriptscriptstyle{#1}}%
{\XXint\scriptscriptstyle\scriptscriptstyle{#1}}%
\!\int}
\def\XXint#1#2#3{{\setbox0=\hbox{$#1{#2#3}{\int}$ }
\vcenter{\hbox{$#2#3$ }}\kern-.6\wd0}}
\def\dashint{\Xint-}
\newcounter{cte}
\definecolor{darkgreen}{rgb}{0.00, 0.50, 0.00}
\definecolor{mahogany}{rgb}{0.75, 0.25, 0.00}
\def\Xint#1{\mathchoice
   {\XXint\displaystyle\textstyle{#1}}%
   {\XXint\textstyle\scriptstyle{#1}}%
   {\XXint\scriptstyle\scriptscriptstyle{#1}}%
   {\XXint\scriptscriptstyle\scriptscriptstyle{#1}}%
   \!\int}
\def\XXint#1#2#3{{\setbox0=\hbox{$#1{#2#3}{\int}$}
     \vcenter{\hbox{$#2#3$}}\kern-.5\wd0}}
\def\dashint{\Xint-}
\def\YYint#1#2#3{{\setbox0=\hbox{$#1{#2#3}{\iint}$}
    \vcenter{\hbox{$#2#3$}}\kern-.51\wd0}}
\theoremstyle{plain}
\newtheorem{proposition}{Proposition}[section]
\newtheorem{lemma}[proposition]{Lemma}
\newtheorem{theorem}{Theorem}
\newtheorem{corollary}[proposition]{Corollary}
\theoremstyle{remark}
\theoremstyle{definition}
\numberwithin{equation}{section}
\begin{document}
\begin{frontmatter}

\title{Smoothness of weight sharply discards\\ Lavrentiev's gap for double phase functionals%\footnote{moze cos jak: Lavrentiev Gap for Double Phase Functional Sharply Defeated by Weight Smoothness}
}

\author{Michał Borowski}\ead{m.borowski@mimuw.edu.pl}

\address{Institute of Applied Mathematics and Mechanics,
University of Warsaw, ul. Banacha 2, 02-097 Warsaw, Poland
}

\fntext[myfootnote]{Mathematical Subject Classification: 49J45 (46E30, 46E40, 46A80)}
\fntext[myfootnote]{The author is supported by the Polish Ministry of Science and Education grant PN/02/0001/2023.}

\begin{abstract}
    We show that the smoother the weight, the broader the range of exponents for which the Lavrentiev's gap is absent for the double phase functionals, i.e.,
    \begin{equation*}
        u \mapsto \int_{\Omega} \left(|\nabla u|^p + a(x)|\nabla u|^q\right)\,\d x\,, \quad 1 \leq p \leq q < \infty,\, a(\cdot) \geq 0\,.
    \end{equation*}
    In particular, if $a \in C^\infty$, then no additional restrictions are required on $p$ and $q$.
    For $a \in C^{k, \alpha}$, we establish the optimal range of exponents, which reads $q \leq p + (k + \alpha)\max(1, p/N)$. Thereby, we extend previously known results which consider H\"older continuous $a$ (i.e., $q \leq p + \alpha\max(1, p/N)$), showing that the range of exponents extends naturally upon imposing more regularity on $a$.
\end{abstract}

\end{frontmatter}

\section{Introduction and main results}
We study the minimization problem of the double phase functionals, i.e., 
\begin{equation}\label{eq:defF}
    \cF[u;\Omega] \coloneqq \int_{\Omega} \left(|\nabla u|^p + a(x)|\nabla u|^q\right)\,\d x\,,
\end{equation}
where a priori $1 \leq p \leq q < \infty$, the weight function $a$ is non-negative and bounded, and $\Omega \subseteq \rn$ is a bounded Lipschitz domain. We provide a new technique that leads to a conclusion that if the weight $a$ belongs to the class $C^{k, \alpha}$ and if $q \leq p + (k + \alpha)\max(1, p/N)$, then the Lavrentiev's phenomenon for functional~\eqref{eq:defF} is excluded. This is the content of Theorem~\ref{theo:Cka}. It is essential to note that this result is sharp due to the counterexamples already present in the literature, such as the ones in~\cite{eslemi, badi}. We also point out that $a \in C^{\infty}$ excludes Lavrentiev's phenomenon for all exponents $p$ and $q$, as stated in Corollary~\ref{coro:Cinf}.

Previously, the result had been proven only for $k = 0$, by~\cite{bgs-arma, eslemi}, and $k = 1$, by~\cite{BCFM}. The methods therein strongly rely on the analysis of the rate of vanishing of the weights, which does not apply to $k>1$. We expose this in more detail in the further part of the introduction. To handle weights smoother than $C^{1,\alpha}$, we introduce an innovative decomposition of the weight and utilize the theory of Muckenhoupt $A_r$ classes. Initially, we prove the absence of Lavrentiev's gap for weights that admit the factorization involving the Muckenhoupt class and a class of weights with a sufficient decay rate. %\blue{We stress that such a decomposition is introduced here for the first time.}
This is described by Theorem~\ref{theo:Lavdecomp}. It turns out that weights of suitable smoothness rate always admit the prescribed factorization --- a surprising fact that we prove, see Theorem~\ref{theo:decomp}.

Our results provide a foundation for possible further development of the theory of regularity for double phase functionals with regular weights, such as the regularity of minimizers in the spirit of~\cite{comi, colmar, HaOk1, HaOk2, BaaBy, bacomi-cv, Besch, ChCh, Ok}.\newline

{\noindent \bf Lavrentiev's phenomenon.} One of the challenges when dealing with functionals of $(p, q)$-growth, including~\eqref{eq:defF}, is to address the so-called Lavrentiev's phenomenon. For us, this means that we would like to exclude the situation in which
\begin{equation}\label{eq:LavPres}
    \inf_{u \in u_0 + W^{1, 1}_0(\Omega)} \cF[u] < \inf_{u \in u_0 + C_c^{\infty}(\Omega)} \cF[u]
\end{equation}
for any given $u_0 \in W^{1, 1}(\Omega)$, playing the role of boundary data, such that $\cF[u_0] < \infty$. Inequality~\eqref{eq:LavPres} means that minimizers of the functional $\cF$ cannot be approximated in the energy by smooth functions, which typically precludes any regularity theory, as seen in~\cite{BadMinim}. This is why the important first step in studying double phase functionals is to identify those parameters for which Lavrentiev's gap is excluded. This topic for functional~\eqref{eq:defF} and other problems resembling double phase structure has been widely studied, e.g., in~\cite{BaaBy, basu, badi, filomena, CarMeDeLi, zh, eslemi, BCFM}. Recently, the topic of excluding Lavrentiev's phenomenon for general classes of functionals was developed, see~\cite{BCM, bgs-arma, BMT, BBCLM, ha-aniso, Lukas23, Bousquet-Pisa}. In parallel, developments have been made to the related problem of establishing density of smooth functions in Musielak-Orlicz-Sobolev spaces, see~\cite{yags, hahab, Bor-Chl, ACCZG, C-b, Gossez}. For more information on Lavrentiev's phenomenon and related topics, we refer to survey articles~\cite{Buttazzo-Belloni, CerfMar, Wiktor, MinRad, SomeLav, Pocket}. \newline

{\noindent \bf Double phase functionals and prior results.} The study of the functional~\eqref{eq:defF} dates back to~\cite{zh} and is one of the most profound examples of the functionals with so-called $(p, q)$-growth, introduced in~\cite{Marcellini}. The functional models the switch between $p$- and $q$-growth regimes (phases) according to the weight $a$. The set where $a$ vanishes corresponds to the $p$-phase, its complement to the $p$-$q$-phase, while the decay or regularity of $a$ is expected to govern the transition between those phases.

Studying functional~\eqref{eq:defF} presents typical challenges that appear in studying $(p, q)$-growth problems. It turns out that one cannot hope for any regularity of minimizers unless the exponents $p$ and $q$ are sufficiently close to each other, with closeness quantified by the behavior of the weight $a$, see~\cite{BadMinim}. Initial idea, developed through~\cite{eslemi, zh}, is to consider $a \in C^{0, \alpha}$ with $\frac{q}{p} \leq 1 + \frac{\alpha}{N}$. In particular, this excludes Lavrentiev's gap. The regularity theory under this condition has been established in the seminal paper by Colombo and Mingione~\cite{comi}. In parallel, the regularity theory of a priori bounded minimizers was also developed in~\cite{colmar}, this time in the range $q \leq p + \alpha$. The absence of Lavrentiev's gap in this case is established in~\cite{bgs-arma}. The related regularity results for minimizers with other a priori assumptions are given in~\cite{bacomi-cv, Ok}.

The conditions mentioned above have their drawbacks. For instance, for a given $p \leq N$, one a priori requires that $q \leq p +1$, as $C^{0, \alpha}$ is a non-trivial class only for $\alpha \in (0, 1]$. In~\cite{BCFM}, class $C^{0, \alpha}$ is replaced with $\cZ^{\alpha}$ for arbitrary $\alpha > 0$. Following~\cite[Definition 1.1]{BCFM}, we say that non-negative function $\bfun \in \SP^{\vk}(\anyopen)$, $\anyopen \subseteq \rn$ open, if there exists a constant $C$ such that for all $x, y \in \anyopen$, it holds
\begin{equation}\label{eq:defZ}
    \bfun(x) \leq C\left(\bfun(y) + |x-y|^{\vk} \right)\,.
\end{equation}
The class $\cZ^\vk$ only restricts how $\bfun$ approaches $0$, and imposes no control on $\bfun$ when it is separated from $0$. The parameter $\vk$ indicates how rapidly the function $\bfun$ decays. As shown in~\cite{BCFM}, weights of class $\cZ^{\vk}$ allow excluding Lavrentiev's phenomenon for an arbitrary choice of $p$ and $q$. Regularity of minimizers in this case comes from the general theory developed in~\cite{HaOk1, HaOk2}.

As described in~\cite[Proposition 1.3]{BCFM}, the class $\cZ^{\vk}$ does not share many commonalities with smoothness in general. Although the inclusions $C^{0, \alpha} \subset \cZ^{\alpha}$ and $C^{1, \alpha} \subset \cZ^{1 + \alpha}$ hold, there is no such inclusion for $C^{k, \alpha}$ with $k > 1$. Indeed, one can think about the function $t \mapsto t^2$, which, although being $C^{\infty}$, does not have sufficient decay rate to belong to $\cZ^{\vk}$ for any $\vk > 2$.
\newline

{\noindent \bf Main result.} A much more natural way of extending the previous results than involving the class $\cZ^\vk$ is to consider smoother weights, namely, of class $C^{k, \alpha}$, and expand the range of exponents so that the role of $\alpha$ in previous conditions is replaced with $k + \alpha$. As described in the last paragraph, such a result cannot be obtained for any $k > 1$ by the approach of~\cite{BCFM}, which generalizes those of~\cite{eslemi, bgs-arma}. Neither can it be obtained by applying any of the many conditions for discarding Lavrentiev's gap that are available in the literature, listed above, in the section about Lavrentiev's phenomenon. The present paper presents the first general results. 

In the following theorem, we prove the absence of Lavrentiev's gap under the assumptions that $a \in C^{k, \alpha}$ and $q \leq p + (k+\alpha)\max(1, p/N)$, for any $k \in \N$ and $\alpha \in (0, 1]$.
\begin{theorem}[Absence of Lavrentiev's phenomenon]\label{theo:Cka}
    Let $\Omega \subseteq \rn$ be a bounded Lipschitz domain and let $\Uset \subseteq \rn$ be open and such that $\Omega \Subset \Uset$. Let $\cF$ be defined as in~\eqref{eq:defF} with $q \leq p + (k+\alpha)\max(1, p/N)$ and non-negative $a \in C^{k, \alpha}(U)$ for some numbers $k \in \N$, $\alpha \in (0, 1]$. Then, for any $u_0 \in W^{1, 1}(\Omega)$ such that $\cF[u_0;\Omega] < \infty$, it holds
        \begin{equation}\label{eq:LavAbs}
            \inf_{u \in u_0 + W^{1, 1}_0(\Omega)} \cF[u] = \inf_{u \in u_0 + C_c^{\infty}(\Omega)} \cF[u]\,.
        \end{equation}
\end{theorem}
In Section~\ref{sec:furres}, we present a more general version of Theorem~\ref{theo:Cka}, namely, Corollary~\ref{coro:genapp}. It says that any function $u$ such that $\cF[u] < \infty$ may be suitably approximated by regular functions, not only $u$ being a minimizer. Moreover, we prove that the range of exponents may be further expanded if we a priori assume that $u \in C^{0, \gamma}$. This is a result in the spirit of~\cite{bacomi-cv, BCM}, which shows that the higher regularity of minimizers to~\eqref{eq:defF} that are a priori H\"older continuous may be proven under an improved range of exponents. \newline

{\noindent \bf Sharpness of Theorem~\ref{theo:Cka}.} In~\cite[Theorem 34]{basu}, it is proven that whenever $q > p + (k+\alpha)\max\left( 1, \tfrac{p-1}{N-1} \right)$, there exists $a \in C^{k, \alpha}$ such that Lavrentiev's phenomenon occurs, i.e.,~\eqref{eq:LavAbs} does not hold. If $p \leq N$, this range reads $q > p + k + \alpha$, while the range guaranteeing absence of Lavrentiev's phenomenon in Theorem~\ref{theo:Cka} reads $q \leq p + k + \alpha$. We thus have the sharp threshold of smoothness of weights beyond which Lavrentiev's gap is discarded, and before which it is not, in general. In this sense, classes $C^{k, \alpha}$ induce a precise scale to the problem.

In case of $p > N$, the aforementioned thresholds for $q$ do not agree, as they read $p + (k+\alpha)\tfrac{p}{N}$ and $p + (k+\alpha)\tfrac{p-1}{N-1}$, respectively. It is nonetheless expected that the former is optimal in the sense described above. Moreover, one can think about the condition from Theorem~\ref{theo:Cka} as the condition for the ratio of exponents, i.e., $\tfrac{q}{p} \leq 1 + \frac{k+\alpha}{N}$. In case of $k=0$, this range is generally acknowledged to be sharp, due to a counterexample in~\cite[Section 3]{eslemi}. In~\cite[Section 4]{BCFM}, this counterexample is generalized to all $k \in \N$. Although the goal is to provide $a \in \cZ^{k+\alpha}$, one can verify that the weight $a$ used there is of class $C^{k, \alpha}$. \newline

Let us point out an important corollary of Theorem~\ref{theo:Cka}.
\begin{corollary}[No Lavrentiev's gap for $C^{\infty}$ weights] \label{coro:Cinf}
    Let $\Omega \subseteq \rn$ be a bounded Lipschitz domain and $\Uset \subseteq \rn$ be open and such that $\Omega \Subset \Uset$. Let $\cF$ be defined as in~\eqref{eq:defF} with any $1 \leq p \leq q < \infty$ and non-negative $a \in C^{\infty}(\Uset)$. Then, for any $u_0 \in W^{1, 1}(\Omega)$ such that $\cF[u_0;\Omega] < \infty$, it holds
        \begin{equation*}
            \inf_{u \in u_0 + W^{1, 1}_0(\Omega)} \cF[u] = \inf_{u \in u_0 + C_c^{\infty}(\Omega)} \cF[u]\,.
        \end{equation*}
\end{corollary}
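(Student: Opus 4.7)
The plan is to derive Corollary~\ref{coro:Cinf} as an immediate specialization of Theorem~\ref{theo:Cka} i), exploiting the fact that a $C^\infty$ weight sits inside every H\"older class $C^{k, \alpha}$ on relatively compact subsets. Given prescribed exponents $1 \leq p \leq q < \infty$, the first step is to pick any integer $k \in \N$ with $k \geq q - p$ and any $\alpha \in (0, 1]$. With this choice, using $\max(1, p/N) \geq 1$, one has
\[
    (k + \alpha)\max(1, p/N) \geq k + \alpha > q - p,
\]
so the exponent constraint $q \leq p + (k + \alpha)\max(1, p/N)$ required by Theorem~\ref{theo:Cka} i) is automatically satisfied.

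Next, I would verify the regularity hypothesis. Since $\Omega \Subset U$, I pick an intermediate open set $U'$ with $\Omega \Subset U' \Subset U$. Because $a \in C^{\infty}(U)$ and $\overline{U'}$ is a compact subset of $U$, all partial derivatives of $a$ up to order $k + 1$ are bounded on $\overline{U'}$; in particular, the derivatives up to order $k$ are Lipschitz continuous on $\overline{U'}$, so $a \in C^{k, 1}(U') \subset C^{k, \alpha}(U')$. Hence the regularity assumption of Theorem~\ref{theo:Cka} i) is fulfilled on $U'$, which still satisfies $\Omega \Subset U'$.

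Applying Theorem~\ref{theo:Cka} i) with these choices of $k$, $\alpha$, and with $U'$ playing the role of $U$, to any boundary datum $u_0 \in W^{1,1}(\Omega)$ with $\cF[u_0;\Omega] < \infty$, yields at once the equality of infima claimed in Corollary~\ref{coro:Cinf}. There is no technical obstacle in this reduction; the corollary is essentially a restatement of Theorem~\ref{theo:Cka} i) in the limit $k \to \infty$, highlighting the qualitative consequence that smoothness of the weight annihilates the Lavrentiev gap regardless of how far apart $p$ and $q$ are.
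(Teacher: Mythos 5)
Your reduction is correct and is exactly the intended route: choose $k \geq q - p$ and $\alpha \in (0,1]$ so that $q \leq p + (k+\alpha)\max(1, p/N)$, observe that $a \in C^{\infty}(\Uset)$ restricts to $C^{k,\alpha}$ on any intermediate open set $\Uset'$ with $\Omega \Subset \Uset' \Subset \Uset$ (by boundedness of derivatives on the compact $\overline{\Uset'}$), and invoke Theorem~\ref{theo:Cka}\,i) with $\Uset'$ in place of $\Uset$. The paper states Corollary~\ref{coro:Cinf} without a separate proof precisely because this specialization is immediate.
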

{\noindent \bf Factorization of weights.} Theorem~\ref{theo:Cka} is a consequence of two results --- Theorems~\ref{theo:Lavdecomp} and~\ref{theo:decomp}. Theorem~\ref{theo:Lavdecomp} provides the absence of Lavrentiev's phenomenon for the class of weights admitting a certain decomposition related to $\cZ^{\vk}$ and Muckenhoupt classes. On the other hand, Theorem~\ref{theo:decomp} says that such a decomposition is always admitted by $C^{k, \alpha}$ functions. Both of those theorems are new.

Before presenting the precise statements, let us introduce the (local) Muckenhoupt weights, as defined in~\cite{extrap}. For every $r \in (1, \infty)$ and open set $\anyopen$, we say that a function $\wfun : \anyopen \to [0, \infty)$ belongs to class $A_r(\anyopen)$ if
\begin{equation}\label{eq:defMuck}
    \sup_{B \subseteq \anyopen} \left( \dashint_{B} \wfun \d z \right) \left( \dashint_{B} \wfun^{-\frac{1}{r-1}} \d z\right)^{r-1} < \infty\,,
\end{equation}
where supremum is taken over all balls inside $\anyopen$. Muckenhoupt classes appear naturally as the class of weights guaranteeing boundedness of the maximal operator in weighted Lebesgue spaces --- a crucial property when proving approximation results via mollification.

We are now in a position to state the following result.
\begin{theorem}\label{theo:Lavdecomp}
        Let $\Omega \subseteq \rn$ be an open and bounded Lipschitz domain and let $\Uset$ be an open set such that $\Omega \Subset \Uset$. Recall the definitions of classes $\cZ^\vk$ and $A_r$ from~\eqref{eq:defZ} and~\eqref{eq:defMuck}. Let $\cF$ be defined as in~\eqref{eq:defF} for numbers $p, q$ and non-negative function $a$ satisfying $a = \bfun \wfun$, where $\bfun \in \SP^{\vk}(\Omega)$ for $1 \leq p \leq q \leq p + \vk \max(1, p/N)$ and $\wfun \in A_q(\Uset) \cap L^\infty(\Omega)$. Then, for any $u_0 \in W^{1, 1}(\Omega)$ such that $\cF[u_0;\Omega] < \infty$, it holds
        \begin{equation*}
            \inf_{u \in u_0 + W^{1, 1}_0(\Omega)} \cF[u] = \inf_{u \in u_0 + C_c^{\infty}(\Omega)} \cF[u]\,.
        \end{equation*}
\end{theorem}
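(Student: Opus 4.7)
The plan is to prove the $\limsup$ bound $\limsup_{\ve\to 0} \cF[u_\ve;\Omega] \leq \cF[u;\Omega']$ for mollifications $u_\ve := u * \phi_\ve$ of an arbitrary competitor $u \in u_0 + W^{1,1}_0(\Omega)$ with $\cF[u;\Omega] < \infty$, where $u$ is first extended across $\partial\Omega$ by gluing with $u_0$ on a slightly larger $\Omega \Subset \Omega' \Subset U$. A routine gluing inside a thin collar around $\partial\Omega$, combined with a diagonal extraction (standard in this context, see~\cite{bgs-arma, BCFM}), produces admissible smooth competitors in $u_0 + C_c^\infty(\Omega)$ and delivers the equality of infima~\eqref{eq:LavAbs}.

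The core step is the estimate of the weighted term. Let $C_0$ denote the $\cZ^\vk$-constant of $\sigma$ and split $\Omega = F_\ve \sqcup E_\ve$ with $F_\ve := \{\sigma > 2C_0\ve^\vk\}$. For $x \in F_\ve$ and $y \in B_\ve(x)$, \eqref{eq:defZ} gives $\sigma(x) \leq C_0\sigma(y)+C_0\ve^\vk \leq C_0\sigma(y)+\sigma(x)/2$, hence $\sigma(x) \leq 2C_0\sigma(y)$. Combined with the Jensen bound $|\nabla u_\ve(x)|^q \leq (|\nabla u|^q * \phi_\ve)(x)$,
\[
\sigma(x)|\nabla u_\ve(x)|^q \leq 2C_0\bigl((\sigma|\nabla u|^q)*\phi_\ve\bigr)(x) \qquad \text{for } x \in F_\ve.
\]
Multiplying by $\omega \in L^\infty(\Omega)$, integrating, and applying Fubini gives $\int_{F_\ve} a|\nabla u_\ve|^q \leq 2C_0 \int \sigma|\nabla u|^q(\omega * \phi_\ve)\,\d y$, which by dominated convergence (exploiting $\omega*\phi_\ve \to \omega$ pointwise with $\|\omega*\phi_\ve\|_\infty \leq \|\omega\|_\infty$ and $\sigma|\nabla u|^q \in L^1$) tends to $2C_0 \int a|\nabla u|^q$.

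The remaining task is to show that the contribution of $E_\ve$ vanishes in the limit. On $E_\ve$, $\sigma \leq 2C_0\ve^\vk$, so $\int_{E_\ve} a|\nabla u_\ve|^q \leq 2C_0\ve^\vk \int_\Omega \omega|\nabla u_\ve|^q$. The assumption $q \leq p + \vk\max(1, p/N)$ is calibrated to absorb the $\ve^\vk$ against a blow-up of $\|\nabla u_\ve\|_q$. For $p \geq N$, Jensen/Hölder directly yields $|\nabla u_\ve(x)|^{q-p} \leq C\ve^{-N(q-p)/p}\|\nabla u\|_{L^p(\Omega')}^{q-p}$, and together with $|\nabla u_\ve|^p \leq (|\nabla u|^p*\phi_\ve)$ and $\omega \in L^\infty$ this produces a bound of order $O(\ve^{\vk - N(q-p)/p})$. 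For $p < N$ the condition $q \leq p + \vk$ is more permissive; one truncates to $T_M u := \max(-M, \min(u, M))$, exploits the bound $\|\nabla (T_M u)_\ve\|_\infty \leq CM/\ve$ coming from $\nabla (T_M u)_\ve = T_M u \ast \nabla\phi_\ve$, and interpolates $\|\nabla(T_M u)_\ve\|_q^q \leq \|\nabla (T_M u)_\ve\|_\infty^{q-p}\|\nabla (T_M u)_\ve\|_p^p$ to obtain $O(\ve^{\vk - (q-p)})$. A final limit $M \to \infty$, justified by $u \in L^{p^*}$ (Sobolev embedding) and dominated convergence for $\cF[T_M u]\to \cF[u]$, completes the argument.

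The main obstacle I anticipate is twofold: (i) the boundary case $q = p + \vk\max(1, p/N)$, where the bounds above are merely of order $O(1)$ rather than $o(1)$ --- here the hypothesis $\omega \in A_q(U)$ is indispensable, as the weighted maximal inequality $M : L^q(\omega)\to L^q(\omega)$ combined with $|\nabla u_\ve| \leq CM(\nabla u)$ provides the compactness needed to extract vanishing in this critical scaling regime; (ii) the removal of the multiplicative constant $2C_0$ on $F_\ve$ when passing to the equality of infima. The latter is handled by iterating the approximation (refining the comparability as $\ve \to 0$ at points where $\sigma > 0$) and invoking convexity and lower semicontinuity of $\cF$, in the manner of~\cite{bgs-arma, BCFM}.
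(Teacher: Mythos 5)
Your plan has the right ingredients — the $\cZ^\vk$ comparison, Jensen's inequality, truncation, the $A_q$ maximal bound, and the split $\Omega = F_\ve \sqcup E_\ve$ is a reasonable framework. But there is a genuine gap in the core estimate on $F_\ve$, and it is precisely the point where the hypothesis $\wfun \in A_q(\Uset)$ must do its work.

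You mollify $\bfun|\nabla u|^q$, bound $\bfun(x)|\nabla u_\ve(x)|^q \leq 2C_0\bigl((\bfun|\nabla u|^q)*\phi_\ve\bigr)(x)$ on $F_\ve$, and then pass to the limit by Fubini and dominated convergence, \emph{explicitly invoking $\bfun|\nabla u|^q \in L^1$}. This is not available. From $\cF[u]<\infty$ you only know $a|\nabla u|^q = \bfun\wfun|\nabla u|^q \in L^1$; since $\wfun$ may degenerate to zero on the set where $\bfun|\nabla u|^q$ is large, $\bfun|\nabla u|^q$ need not be integrable, and the dominated convergence step (as well as the $\limsup$ bound you want from it) fails. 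Relatedly, you have misallocated the role of $A_q$: you believe it is only needed for the critical exponent case on $E_\ve$, where you again invoke $M:L^q(\wfun)\to L^q(\wfun)$ but applied to $|\nabla u|$, which need not lie in $L^q(\wfun)$ either. In fact $A_q$ is indispensable on $F_\ve$ (the main part) for \emph{every} admissible exponent, and the correct object to feed to the maximal operator is $\bfun^{1/q}|\nabla u|$, which \emph{is} in $L^q(\wfun)$ since $\int\wfun\,\bfun|\nabla u|^q = \int a|\nabla u|^q < \infty$. The paper (Theorem~\ref{theo:genapprox}) first pulls $\bfun^{1/q}$ inside the convolution, applies Jensen \emph{after} that, bounds the resulting $(S_\delta^\Omega(\bfun^{1/q}|\nabla u|))^q$ by $c\,(M_{\Uset}(\bfun^{1/q}|\nabla u|))^q$, and only then uses~\eqref{eq:maxbound} to get the dominating $L^1$-function $\wfun\,(M_{\Uset}(\bfun^{1/q}|\nabla u|))^q$. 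Without this re-ordering your chain of inequalities simply has no integrable majorant.

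A secondary but real issue is the multiplicative constant $2C_0 > 1$: your proposed remedy — lower semicontinuity of $\cF$ combined with an "iterated approximation" — only delivers $\cF[u]\leq\liminf\cF[u_\ve]\leq\limsup\cF[u_\ve]\leq 2C_0\cF[u]$, which does not give convergence of the energies nor equality of infima. The correct route, once you have a genuine $L^1$ majorant as above, is equiintegrability together with convergence of $\nabla u_\ve$ in measure and Vitali's theorem, which yields the exact limit $\cF[u_\ve]\to\cF[u]$ with no stray constant; this is what the paper does. Finally, the boundary handling via "gluing in a thin collar" is vague where the paper uses the shrinking convolution $S_\delta^\Omega$ on star-shaped pieces together with a partition of unity; this part is fixable, but as presented it elides the reason one needs Lipschitz regularity of $\partial\Omega$.
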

The statement (but not the proof) of Theorem~\ref{theo:Lavdecomp} has been inspired by the results in~\cite{ZhSur}. This work also considers a product of the weights of two separate classes, although of other kinds and in a different context.

We provide a more general version of Theorem~\ref{theo:Lavdecomp} later, i.e., Theorem~\ref{theo:genapprox}. In contrast to other results available in the literature, such as~\cite[Theorem 3]{BCFM}, Theorem~\ref{theo:Lavdecomp} allows for considering weights of the form of a product of $\cZ^{\vk}$-function, $\vk > 0$, (including $C^{0, \vk}$ if $\vk \in (0, 1]$) and a bounded weight satisfying a Muckenhoupt-type condition. At first glance, it is not clear how widely Theorem~\ref{theo:Lavdecomp} extends the previous contributions. Surprisingly, Theorem~\ref{theo:Lavdecomp} may be directly applied to prove Theorem~\ref{theo:Cka}. The following theorem justifies this fact.
\begin{theorem}\label{theo:decomp}
    Let $\Omega$ and $\Uset$ be open and bounded subsets of $\rn$ such that $\Omega \Subset \Uset$. Recall the definitions of classes $\cZ^\vk$ and $A_r$ from~\eqref{eq:defZ} and~\eqref{eq:defMuck}. Let $a : \Uset \to [0, \infty)$ be of class $C^{k, \alpha}$. Then, there exist functions $\bfun, \wfun : \Omega \to [0, \infty)$ such that $\bfun \in \SP^{k+\alpha}$, $\wfun \in A_{k+\alpha+1}(\Omega) \cap L^{\infty}(\Omega)$ and $a = \bfun \wfun$.
\end{theorem}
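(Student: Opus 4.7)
Since the classical inclusion $C^{k,\alpha}\subset\cZ^{k+\alpha}$ fails for $k\ge 2$ (e.g., $a(x)=x^{2}\notin\cZ^{\vk}$ for $\vk>2$), the factorization must be genuinely nontrivial, and the idea is to build $\bfun$ out of the full $k$-jet of $a$, with each derivative entering at an exponent matched to the target scaling. After replacing $\Uset$ with a slightly smaller neighbourhood of $\Omega$ and using a standard extension, I may assume $a\in C^{k,\alpha}(\rn)$ has compact support with $\|a\|_{C^{k,\alpha}}\le 1$. Setting $\beta_j:=(k+\alpha)/(k+\alpha-j)$ for $j=0,\dots,k$, I propose
\[
\bfun(x):=\sum_{j=0}^{k}\bigl|D^{j}a(x)\bigr|^{\beta_j},\qquad
\wfun(x):=\begin{cases} a(x)/\bfun(x), & \bfun(x)>0,\\ 1, & \bfun(x)=0.\end{cases}
\]
The $j=0$ term of $\bfun$ equals $a$, so $\bfun\ge a$ and $\wfun\le 1$, giving at once the $L^{\infty}(\Omega)$ bound on $\wfun$; moreover $\bfun(x)=0$ forces all $D^{j}a(x)=0$ and in particular $a(x)=0$, so $a=\bfun\wfun$ identically on $\Omega$.

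\noindent\textbf{Verifying $\bfun\in\cZ^{k+\alpha}(\Omega)$.} For each $j\le k$, Taylor's theorem applied to $D^{j}a$ (itself a $C^{k-j,\alpha}$ function) gives
\[
|D^{j}a(x)|\le\sum_{l=0}^{k-j}\frac{|D^{j+l}a(y)|}{l!}\,|x-y|^{l}+C\,|x-y|^{k+\alpha-j}.
\]
Raise this to the $\beta_j$-th power. The identity $(k+\alpha-j)\beta_j=k+\alpha$ turns the remainder into $C|x-y|^{k+\alpha}$. Each cross term $|D^{j+l}a(y)|^{\beta_j}|x-y|^{l\beta_j}$ is split by Young's inequality with exponents $p=(k+\alpha-j)/(k+\alpha-j-l)$ and $q=(k+\alpha-j)/l$, which satisfy $\beta_j p=\beta_{j+l}$ and $l\beta_j q=k+\alpha$; this yields
\[
|D^{j+l}a(y)|^{\beta_j}|x-y|^{l\beta_j}\le\tfrac{1}{p}\,|D^{j+l}a(y)|^{\beta_{j+l}}+\tfrac{1}{q}\,|x-y|^{k+\alpha}.
\]
Summation over $j$ and $l$ rearranges into $\bfun(x)\le C\bigl(\bfun(y)+|x-y|^{k+\alpha}\bigr)$, i.e., $\bfun\in\cZ^{k+\alpha}(\Omega)$.

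\noindent\textbf{Verifying $\wfun\in A_{k+\alpha+1}(\Omega)$.} Fix a ball $B=B(x_0,r)\subseteq\Omega$. By the previous step $\sup_B\bfun\le C\bigl(\inf_B\bfun+r^{k+\alpha}\bigr)$. If $\inf_B\bfun\gtrsim r^{k+\alpha}$, then $\bfun$ is pointwise comparable on $B$, $\wfun$ is comparable to $a/\inf_B\bfun$, and the Muckenhoupt integrand reduces, after normalisation, to the analogous condition for $a$, settled by a direct Taylor expansion. If $\inf_B\bfun\lesssim r^{k+\alpha}$, pick $y_0\in B$ with $\bfun(y_0)\lesssim r^{k+\alpha}$, which forces $|D^{j}a(y_0)|\lesssim r^{k+\alpha-j}$ for every $j\le k$; Taylor's theorem then yields $a(x),\,\bfun(x)\lesssim r^{k+\alpha}$ throughout $B$. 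The rescaling $\widetilde{a}(z):=r^{-(k+\alpha)}a(y_0+rz)$ transports the problem to a $C^{k,\alpha}$ function with uniformly controlled norm on the unit ball, so the $A_{k+\alpha+1}$ bound on $B$ becomes a scale-invariant statement that can be verified uniformly, exploiting that the Taylor polynomial of $\widetilde{a}$ at the origin is a polynomial of degree at most $k$ whose inverse $(k+\alpha)$-th root is integrable on the unit ball.

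\noindent\textbf{Main obstacle.} The crux is the degenerate regime in the Muckenhoupt step, where $\wfun^{-1/(k+\alpha)}=(\bfun/a)^{1/(k+\alpha)}$ blows up along the common vanishing set of $a$ and $\bfun$. The exponents $\beta_j=(k+\alpha)/(k+\alpha-j)$ are chosen exactly so that each derivative contributes an $r^{k+\alpha}$-scale term to $\bfun$ on balls where $|D^{j}a|\sim r^{k+\alpha-j}$, which matches the scaling of the class $A_{k+\alpha+1}$; verifying self-consistently that this scaling yields the uniform Muckenhoupt bound, especially in the degenerate regime, is the technical heart of the argument.
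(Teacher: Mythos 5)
Your decomposition is exactly the one the paper uses: $\bfun=\sum_{j=0}^{k}\|D^{j}a\|^{(k+\alpha)/(k+\alpha-j)}$ and $\wfun=a/\bfun$ (extended by $1$ on the common zero set), and your $\cZ^{k+\alpha}$ verification via Taylor expansion and Young's inequality is the same argument as the paper's Lemma~\ref{lem:Zpart}. The problem is the Muckenhoupt step, which is where the paper's work actually lies, and which your proposal leaves essentially open; indeed your last paragraph concedes the uniform bound in the degenerate regime is ``the technical heart of the argument'' without supplying it.

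Concretely, three things are missing. (1)~In the degenerate regime you rescale to $\widetilde a$ with unit $C^{k,\alpha}$ bound and claim the $A_{k+\alpha+1}$ estimate ``becomes a scale-invariant statement that can be verified uniformly'' because the inverse $(k+\alpha)$-th root of the Taylor polynomial is integrable on the unit ball. This does not follow: the Taylor polynomial of a nonnegative $C^{k,\alpha}$ function can be zero, can change sign, or can vanish on a hypersurface through the ball; merely knowing $\|\widetilde a\|_{C^{k,\alpha}}\lesssim 1$ and $\bfun(\widetilde a)\gtrsim 1$ somewhere gives no compactness from which a uniform $\int_{B_1}\widetilde a^{-1/(k+\alpha)}\,dz$ bound would follow (Fatou goes the wrong way). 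What is actually needed is a quantitative one-dimensional resolution estimate with explicit constants, which is the content of the paper's Lemmas~\ref{lem:poly}, \ref{lem:minima} and~\ref{lem:Muck}. (2)~Your sketch never uses nonnegativity of $a$ beyond $\wfun\le 1$, yet a Glaeser-type argument is essential: one must identify a derivative $D^{\ell}a$ that is nearly constant on the ball \emph{and} show that the extremal direction gives $D^{\ell}a(x)[v^{\ell}]=+\|D^{\ell}a(x)\|$ rather than the negative value, otherwise the lower bound $D^{\ell}a(z)[v^{\ell}]\ge\varepsilon\Mbound>0$ required by the integral lemma fails. The paper proves this sign constraint by contradiction from $a\ge 0$, and this step has no counterpart in your write-up. (3)~Your dichotomy $\inf_{B}\bfun\gtrsim r^{k+\alpha}$ vs.\ $\inf_{B}\bfun\lesssim r^{k+\alpha}$ does not address balls much larger than the local scale $R_{x}\sim\bfun(x)^{1/(k+\alpha)}$; there the paper first proves a sharper averaged estimate on ``medium'' balls $B(x,R_{x})$ and then covers large balls by a Vitali family of medium ones to close the argument, a step that is also absent here. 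So the overall strategy and the $\cZ$ half match the paper, but the Muckenhoupt half is a correct heuristic rather than a proof, and the main difficulties it gestures at are precisely the ones that remain to be solved.
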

Theorem~\ref{theo:Lavdecomp} suggests the possibility of extending the entire theory of double phase functionals to the one with weights proposed therein by incorporating the Muckenhoupt weights theory into the field. On the other hand, Theorem~\ref{theo:decomp} shows a vital application of considering the weights as in Theorem~\ref{theo:Lavdecomp}, namely, for smoother weights than of H\"older continuity class. We thus believe that Theorems~\ref{theo:Lavdecomp} and~\ref{theo:decomp} can be of crucial use for further developments of the theory of double phase functionals with $C^{k, \alpha}$ weights.

Let us stress that the decomposition proposed in Theorem~\ref{theo:decomp} is in general non-trivial. First, let us point our attention to the functions $x \mapsto |x_1|^{2n}$, $n \in \N$, which although being $C^{\infty}$, belong to class $\cZ^{\vk}$ only for $\vk \leq 2n$, while belong to the class $A_{\vk + 1}$ only for $\vk > 2n$. Both components $\bfun$ and $\wfun$ are thus important. Second, note that the function $x \mapsto e^{-\frac{1}{|x|^2}}$ is $C^\infty$, belongs to $\cZ^{\vk}$ with any $\vk > 0$, but fails to satisfy the Muckenhoupt condition with any finite parameter. Consequently, it is impossible to embed any smoothness class into some Muckenhoupt class. Finally, consider function $t \mapsto t^6\sin^2(1/t)$. This function is of class $C^{2, 1}$, but does not belong to either $\cZ^3$ nor $A_4$, even after restriction to any neighbourhood of $0$. However, it may be proved that $t \mapsto t^4$ belongs to $\cZ^3$, while $t \mapsto t^2\sin^2(1/t)$ belongs to $A_4$. Therefore, it is not only false that $a$ always coincides with either $\bfun$ or $\wfun$, but it is also impossible to point out separate regions of $\Omega$ where $a$ behaves more like $\bfun$ or $\wfun$. We remark that the factorization of Theorem~\ref{theo:decomp} trivializes only in the cases of $k = 0$ and $k = 1$, as we have $C^{0, \alpha} \subseteq \cZ^{\alpha}$ and $C^{1, \alpha} \subseteq \cZ^{1 + \alpha}$, see~\cite[Remark 1.2 and Proposition 1.3]{BCFM}.

We also stress that although the results of Theorems~\ref{theo:Cka} and~\ref{theo:Lavdecomp} are local, due to the requirement that the weight lives on a bigger set than $\Omega$, it is ultimately not a setback for the further investigation of minimizers. For instance, works~\cite{eslemi, comi, colmar} use only local approximation results to prove (local) regularity of minimizers to~\eqref{eq:defF}.\newline

{\noindent \bf Open question on regularity of minimizers.} The regularity theory for the minimizers of double phase functionals, in terms of belonging to classes $C^{0, \beta}, C^{1, \beta}$, already received notable attention when considering H\"older continuous weights, see~\cite{eslemi, colmar, comi, bacomi-cv, Ok}. Later works of~\cite{HaOk1, HaOk2, hhl}, dealing with far more general problems, in particular establish regularity theory for double phase with weights in class $\cZ^{\vk}$. We stress that the mentioned papers do not cover the case of weights of class $C^{k, \alpha}$ in general, and thus, the suitable regularity theory may still be developed. We believe that Theorems~\ref{theo:Lavdecomp} and~\ref{theo:decomp} are a vital step in this direction. Indeed, they suggest an approach to the problem that combines two fields of great interest: double phase functionals and problems in weighted Sobolev spaces. The latter one is already well established, as described, for instance, in~\cite{NLP, Fabes}.

We also emphasize that, as our results do not align with previously known results, there is still room for improvement in the general conditions for discarding Lavrentiev's gap, for which our result can be a starting point. Moreover, our results have consequences for approximation properties in Musielak-Orlicz-Sobolev spaces of double phase type, and for related PDEs, which we briefly describe in Section~\ref{sec:Musielak} \newline  

{\noindent \bf Methods.} The first idea of the proof of Theorem~\ref{theo:Cka} is to join two different approaches to the problem. The first approach, developed through~\cite{zh, eslemi, bgs-arma} and culminating in~\cite{BCFM}, involves utilizing the rate of decay of the weight, expressed via the class $\SP$. The second one, which we introduce into the topic, is to use the density of smooth functions in the weighted Sobolev space associated with the weight of the (local) Muckenhoupt $A_q$ class. Neither method alone suffices to exploit the higher smoothness of $a$. It turns out that both approaches can be merged in the sense of Theorem~\ref{theo:Lavdecomp}, i.e., if the weight decomposes into the product of weights, each one being suitable for a different approach. Still, it is not obvious Theorem~\ref{theo:Lavdecomp} may be applied to smooth weights. This proved to be the most challenging part of the present paper, which we justify in Theorem~\ref{theo:decomp}.

To prove it, we need to construct a suitable decomposition of any $C^{k, \alpha}$ function into a product of $\cZ^{\vk}$ and Muckenhoupt weights. The first key observation is that although a smooth weight itself does not have to admit a sufficient rate of decay to belong to a suitable $\cZ^{\vk}$ class, it can be forced to do so via corrections by its derivatives, as seen in Lemma~\ref{lem:Zpart}. The proof that the remaining factor is of a suitable Muckenhoupt class is the content of Proposition~\ref{prop:Muck}. In the core, the approach relies on a careful, local comparison of the weight to its Taylor polynomial, also taking advantage of its non-negativity through techniques similar to the so-called Glaeser inequality. This allows not only checking the Muckenhoupt condition on small balls, but also asserting much more precise control on the balls we call medium-sized. The latter one makes it possible to use covering arguments in order to check the Muckenhoupt condition on a large scale, which would not be possible with the sole information that the Muckenhoupt condition is satisfied on the balls from the covering. For the more detailed outline of the proof, see the beginning of Section~\ref{sec:proofs1}. 

The proofs of Theorem~\ref{theo:Lavdecomp} and its generalization, Theorem~\ref{theo:genapprox}, employ techniques already well-established in the field. The proof of Theorem~\ref{theo:genapprox} may be viewed as an extension of the proof of~\cite[Theorem 3]{BCFM}. The additional steps account for the Muckenhoupt part of the weight, which we use for the boundedness of the maximal operator. \newline

{\noindent \bf Organization.} In Section~\ref{sec:furres}, we present further results and applications of our main theorems. In particular, in Section~\ref{sec:gen}, we present Theorem~\ref{theo:genapprox} and Corollary~\ref{coro:genapp}, generalizing Theorems~\ref{theo:Cka} and~\ref{theo:Lavdecomp}. Section~\ref{sec:proofs1} is devoted to the proof of Theorem~\ref{theo:decomp}, while in Section~\ref{sec:proofs2}, we give proofs of Theorems~\ref{theo:Cka},~\ref{theo:Lavdecomp} and~\ref{theo:genapprox}. \newline

{\noindent \bf Notation.} For every $x \in \rn$ and $r > 0$, we shall denote a ball centred in $x$ and with radius $r$ by $B(x, r)$ or $B_r(x)$. For any ball $B = B(x, r)$ and any $c > 0$, we shall denote $cB \coloneqq B(x, cr)$.

For any function $f \in C^{0, \beta}$, $\beta \in (0, 1]$, we shall denote by $[f]_{C^{0, \beta}}$ the H\"older seminorm of $f$.

Throughout the paper, we shall use the letter $c$ to denote a positive constant that may differ from line to line. Sometimes, to set the attention, we shall indicate in parentheses what constant $c$ depends on.
\section{Further results}\label{sec:furres}
\subsection{General approximation results}\label{sec:gen}
We now provide a general approximation theorem (Theorem~\ref{theo:genapprox}), which extends Theorem~\ref{theo:Lavdecomp} to arbitrary finite-energy functions. Namely, we prove that any $u \in W^{1, 1}_0(\Omega)$ may be approximated by $C_c^{\infty}(\Omega)$ in a way related to functional~\eqref{eq:defF}. Moreover, in the spirit of~\cite{bacomi-cv}, we show that those conditions may be weakened upon assuming that the approximated function is H\"older continuous. The theorem reads as follows.
\begin{theorem}[General approximation]\label{theo:genapprox}
        Let $\Omega \subseteq \rn$ be an open and bounded Lipschitz domain and let $\Uset$ be an open set such that $\Omega \Subset \Uset$. Recall the definitions of classes $\cZ^\vk$ and $A_r$ from~\eqref{eq:defZ} and~\eqref{eq:defMuck}. Let $\cF$ be defined as in~\eqref{eq:defF} for some numbers $p, q$ satisfying $1 \leq p \leq q < \infty$ and non-negative function $a$ satisfy $a = \bfun \wfun$, where $\wfun \in A_q(\Uset) \cap L^\infty(\Omega)$ and $\bfun : U \to [0, \infty)$. Let $u \in W^{1, 1}_0(\Omega)$ be such that $\cF[u;\Omega] < \infty$. Suppose that either
        \begin{enumerate}[i)]
            \item $\bfun \in \cZ^{\vk}(\Omega)$ for $\vk > 0$ such that $q \leq p + \vk\max(1, p/N)$ or
            \item $u \in C^{0, \gamma}(\Omega)$ and $\bfun \in \cZ^{\vk}(\Omega)$ for some $\gamma \in (0, 1)$ and $\vk > 0$ such that $q \leq p + \frac{\vk}{1-\gamma}$.
        \end{enumerate}
        Then there exists a sequence $\{u_n\}_n \subset C_c^{\infty}(\Omega)$ such that
    \begin{equation}\label{eq:theo2}
        \cF[u_n;\Omega] \xrightarrow{n \to \infty} \cF[u;\Omega] \quad \text{ and } \quad \|u - u_n\|_{W^{1, 1}(\Omega)} \xrightarrow{n \to \infty} 0\,.
    \end{equation}
\end{theorem}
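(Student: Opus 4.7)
The plan is to adapt the mollification argument of~\cite{BCFM}, developed for pure $\cZ^\vk$-weights, by exploiting the boundedness of the Hardy--Littlewood maximal operator on $L^q(\wfun)$ --- a consequence of $\wfun \in A_q(\Uset)$ --- to absorb the Muckenhoupt factor. A standard preliminary reduction allows us to assume $u$ is bounded and compactly supported in $\Omega$: in case (i) the truncation $T_t u := \max(-t, \min(t,u))$ satisfies $\cF[T_t u;\Omega] \to \cF[u;\Omega]$ and $T_t u \to u$ in $W^{1,1}(\Omega)$ as $t\to\infty$ by dominated convergence (finiteness of $\cF[u]$ supplies the dominating function), while case (ii) already has $u$ bounded by hypothesis. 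A Lipschitz rescaling of $\Omega$ into a slightly smaller domain then yields compact support inside $\Omega$ with arbitrarily small loss of energy.

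Next, set $u_n := u * \rho_{\delta_n}$ for a smooth radial mollifier and a sequence $\delta_n\downarrow 0$; for $n$ large, $u_n \in C_c^\infty(\Omega)$ and $u_n\to u$ in $W^{1,1}(\Omega)$ with $\nabla u_n\to\nabla u$ almost everywhere. The core pointwise estimate combines Jensen's inequality applied to $t \mapsto t^q$ with the defining $\cZ^\vk$-inequality $\bfun(x) \leq C_0(\bfun(x-y) + \delta_n^\vk)$ valid for $|y|\leq\delta_n$:
\begin{equation*}
    \bfun(x)\wfun(x)|\nabla u_n(x)|^q \;\leq\; C_0\,\wfun(x) \int \bigl(\bfun(x-y) + \delta_n^\vk\bigr)|\nabla u(x-y)|^q\,\rho_{\delta_n}(y)\dy.
\end{equation*}
Integrating in $x$ over $\Omega$, invoking Fubini with the substitution $z = x-y$, and denoting $\wfun_n^\sharp := \wfun * \check{\rho}_{\delta_n}$, we arrive at
\begin{equation*}
    \int_\Omega \bfun\wfun|\nabla u_n|^q \dx \;\leq\; C_0 \int \bigl(\bfun(z) + \delta_n^\vk\bigr)|\nabla u(z)|^q\,\wfun_n^\sharp(z)\,\d z.
\end{equation*}
Since $\wfun_n^\sharp \leq \|\wfun\|_{L^\infty(\Omega)}$ and $\wfun_n^\sharp \to \wfun$ at Lebesgue points, the principal term tends to $C_0 \int \bfun\wfun|\nabla u|^q \,\d z$ by passage to the limit in the mollified weight.

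The $\delta_n^\vk$-remainder is controlled by exploiting the a priori regularity of $u$ to bound $\|\nabla u_n\|_{L^\infty}$: in case (i) with $p<N$ one has $\|\nabla u_n\|_\infty \lesssim \delta_n^{-1}\|u\|_{L^\infty}$, while $p\geq N$ combined with Morrey's embedding $W^{1,p}\hookrightarrow C^{0,1-N/p}$ yields $\|\nabla u_n\|_\infty \lesssim \delta_n^{-N/p}$; in case (ii), $u \in C^{0,\gamma}(\Omega)$ gives $\|\nabla u_n\|_\infty \lesssim \delta_n^{\gamma - 1}[u]_{C^{0,\gamma}}$. Splitting $|\nabla u_n|^q = |\nabla u_n|^{q-p}\cdot|\nabla u_n|^p$ and feeding in these bounds, together with Jensen and the $L^\infty$-control on $\wfun$, the hypothesis $q-p \leq \vk\max(1, p/N)$, respectively $q-p \leq \vk/(1-\gamma)$, precisely translates into a non-negative power of $\delta_n$ in the remainder, so the error vanishes as $n\to\infty$.

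Fatou's lemma gives $\cF[u]\leq\liminf\cF[u_n]$, and combined with the above we obtain $\cF[u] \leq \liminf\cF[u_n] \leq \limsup\cF[u_n] \leq C_0\cF[u]$; the $W^{1,1}$-convergence in~\eqref{eq:theo2} is a standard consequence of mollification. The hard part will be upgrading this uniform bound to the genuine convergence $\cF[u_n]\to\cF[u]$, since the constant $C_0\geq 1$ intrinsic to the $\cZ^\vk$-inequality cannot be improved by shrinking $\delta_n$. This is precisely where the Muckenhoupt hypothesis becomes indispensable: the $A_q(\Uset)$-boundedness of the maximal operator, applied through the pointwise domination $\wfun_n^\sharp \lesssim M\wfun$ and a weighted Hölder argument, yields uniform integrability of $\{a|\nabla u_n|^q\}_n$, after which Vitali's convergence theorem (combined with the almost-everywhere convergence of the integrands) produces the desired $\cF[u_n]\to\cF[u]$.
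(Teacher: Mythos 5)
Your plan correctly identifies the key ingredients---truncation, mollification, the $\cZ^{\vk}$ inequality to handle $\bfun$, and the $A_q$ maximal inequality for $\wfun$---but the way you combine Jensen's inequality and Fubini destroys exactly the structure that makes the Muckenhoupt hypothesis usable. After applying Jensen to $t\mapsto t^q$ and pulling the weight through with Fubini, you land on
\begin{equation*}
\int_\Omega a|\nabla u_n|^q\dx \;\lesssim\; \int \bfun(z)|\nabla u(z)|^q\,\wfun_n^\sharp(z)\,\d z + \text{(error)}\,,
\end{equation*}
and to pass to the limit (or to extract equiintegrability) you need $\bfun|\nabla u|^q$ to be dominated by something integrable uniformly in $n$. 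But the only a priori control is $\int a|\nabla u|^q = \int\bfun\wfun|\nabla u|^q<\infty$, which does \emph{not} imply $\bfun|\nabla u|^q\in L^1$ when $\wfun$ degenerates. Your proposed fix, the pointwise bound $\wfun_n^\sharp\lesssim M\wfun$ plus a ``weighted H\"older argument,'' does not rescue this: the $A_q$ condition gives boundedness of $M$ on $L^q(\wfun)$, i.e.\ $\int (Mf)^q\wfun\lesssim\int|f|^q\wfun$; it says nothing about $M$ applied to $\wfun$ itself, and $M\wfun$ need not even be locally integrable. In short, your argument puts the maximal operator on the wrong function.

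The paper avoids this by \emph{not} applying Jensen to the whole $q$-th power. It first splits off the $\delta^{\vk}$-error (controlled via $\|\nabla u_\delta\|_\infty^{q-p}$ and $|\nabla u_\delta|^p\le S_\delta(|\nabla u|^p)$, exactly as you do), and for the main term it keeps the inner infimum of $\bfun$ to write
\begin{equation*}
\bfun^-_\delta\,\wfun(x)\,|\nabla u_\delta(x)|^q\;\le\;\wfun(x)\bigl(S_\delta^\Omega(\bfun^{1/q}|\nabla u|)(x)\bigr)^q\;\le\;c\,\wfun(x)\bigl(M_U(\bfun^{1/q}|\nabla u|)(x)\bigr)^q\,.
\end{equation*}
Here the maximal operator acts on $f:=\bfun^{1/q}|\nabla u|$, and precisely because $\int|f|^q\wfun=\int a|\nabla u|^q<\infty$, the $A_q(U)$-boundedness of $M_U$ on $L^q(U,\wfun)$ produces a single $L^1$ dominating function for the whole family, hence equiintegrability, hence Vitali. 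This is the step your proof is missing, and it cannot be circumvented by Fubini plus dominated convergence.

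A smaller point: you replace the paper's shrinking convolution $S_\delta^\Omega$ (followed by a partition-of-unity patching on Lipschitz domains) by ordinary mollification plus a ``Lipschitz rescaling'' of $\Omega$. Be aware that rescaling moves $u$ but not $a$, so showing that the rescaled function has nearby energy already requires a $\cZ^{\vk}$-type comparison of $a$ between a point and its rescaled image; this is not automatic and needs an argument similar in spirit to the one you already run for the mollification step. The $S_\delta^\Omega$ operator handles the shrinking and the mollification in one stroke, which is why the paper uses it.
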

The conclusion~\eqref{eq:theo2} is also sometimes referred to as the absence of Lavrentiev's gap. The result is more general than~\eqref{eq:LavAbs}, and reveals more connection to the theory of function spaces, namely, Musielak-Orlicz-Sobolev spaces of double phase type, briefly discussed in Section~\ref{sec:Musielak}.

For the sake of completeness, let us formulate Theorem~\ref{theo:genapprox} in terms of regular weights. The following corollary is a direct consequence of Theorems~\ref{theo:decomp} and~\ref{theo:genapprox}. 
\begin{corollary}\label{coro:genapp}
        Let $\Omega \subseteq \rn$ be an open and bounded Lipschitz domain and let $\Uset$ be an open set such that $\Omega \Subset \Uset$. Let $\cF$ be defined as in~\eqref{eq:defF} for some numbers $p, q$ satisfying $1 \leq p \leq q < \infty$, and non-negative function $a \in C^{k, \alpha}(U)$ for some numbers $k \in \N$, $\alpha \in (0, 1]$. Let $u \in W^{1, 1}_0(\Omega)$ be such that $\cF[u;\Omega] < \infty$. Suppose that either
        \begin{enumerate}[i)]
            \item $q \leq p + (k+\alpha)\max(1, p/N)$ or
            \item $u \in C^{0, \gamma}(\Omega)$ and $q \leq p + \frac{k+\alpha}{1-\gamma}$ for some $\gamma \in (0, 1)$.
        \end{enumerate}
        Then there exists a sequence $\{u_n\}_n \subset C_c^{\infty}(\Omega)$ such that
    \begin{equation*}
        \cF[u_n;\Omega] \xrightarrow{n \to \infty} \cF[u;\Omega] \quad \text{ and } \quad \|u - u_n\|_{W^{1, 1}(\Omega)} \xrightarrow{n \to \infty} 0\,.
    \end{equation*}
\end{corollary}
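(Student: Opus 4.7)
The plan is to combine the factorization provided by Theorem~\ref{theo:decomp} with the general approximation statement of Theorem~\ref{theo:genapprox}, as announced in the text after Theorem~\ref{theo:genapprox}. Concretely, I would fix an auxiliary open set $V$ with $\Omega \Subset V \Subset \Uset$ and apply Theorem~\ref{theo:decomp} to $a \in C^{k,\alpha}(\Uset)$ on the pair $(V, \Uset)$, obtaining $a = \bfun \wfun$ on $V$ with $\bfun \in \cZ^{k+\alpha}(V)$ and $\wfun \in A_{k+\alpha+1}(V) \cap L^{\infty}(V)$. I would then invoke Theorem~\ref{theo:genapprox} on $(\Omega, V)$ with $\vk = k+\alpha$: hypothesis~(i) of the corollary coincides with hypothesis~(i) of Theorem~\ref{theo:genapprox} for this $\vk$, and likewise for hypothesis~(ii) (including the extra assumption $u \in C^{0,\gamma}$). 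The sequence $\{u_n\}_n \subset C_c^{\infty}(\Omega)$ produced by Theorem~\ref{theo:genapprox} then delivers both convergences claimed in the corollary.

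The one step that requires some care is the matching of the Muckenhoupt exponent, since Theorem~\ref{theo:decomp} furnishes $\wfun \in A_{k+\alpha+1}$ whereas Theorem~\ref{theo:genapprox} requires $\wfun \in A_q$. When $k+\alpha+1 \leq q$ the classical nesting $A_r \subseteq A_s$ for $r \leq s$ closes this gap directly. Otherwise, I would not use the full smoothness of $a$ but apply Theorem~\ref{theo:decomp} to a reduced pair $(\tilde k, \tilde \alpha)$, with $\tilde k \in \N$ and $\tilde \alpha \in (0,1]$, chosen so that $\tilde k + \tilde \alpha$ lies in the interval $[(q-p)/\max(1, p/N),\, \min(q-1,\, k+\alpha)]$ in case~(i), respectively in $[(q-p)(1-\gamma),\, \min(q-1,\, k+\alpha)]$ in case~(ii). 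A short algebraic check using $p \geq 1$ and the exponent hypothesis of the corollary shows that these intervals are non-empty. With this reduced decomposition one has simultaneously $\wfun \in A_{\tilde k + \tilde \alpha + 1} \subseteq A_q$ and $\bfun \in \cZ^{\tilde k + \tilde \alpha}$ still satisfying the exponent balance required by Theorem~\ref{theo:genapprox}, after which the argument concludes as before.

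The main obstacle is therefore not conceptual but bookkeeping: one must verify that the interval above always contains a point expressible as $\tilde k + \tilde \alpha$ with $\tilde k \in \N$, $\tilde \alpha \in (0,1]$, and separately treat the truly degenerate situation $p = q = 1$, where $q-1 = 0$ and the interval collapses; in that case the functional reduces to $\int_{\Omega}(1+a(x))|\nabla u|\,\d x$ with $a \in L^{\infty}$, so the conclusion follows immediately from the classical density of $C_c^{\infty}(\Omega)$ in $W^{1,1}(\Omega)$ combined with dominated convergence.
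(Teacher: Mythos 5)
Your proposal is correct and matches the route the paper itself takes (the corollary is stated as a direct consequence of Theorems~\ref{theo:decomp} and~\ref{theo:genapprox}, and the paper's proof of Theorem~\ref{theo:Cka} realizes exactly this scheme): factorize $a$ via Theorem~\ref{theo:decomp}, then feed $\bfun$ and $\wfun$ into Theorem~\ref{theo:genapprox}, lowering the smoothness exponent if needed so that the Muckenhoupt factor lands in $A_q$. One small simplification is available and is what the paper does: always pick the minimal admissible exponent, $\ell+\beta=(q-p)/\max(1,p/N)$ in case (i) (resp.\ $(q-p)(1-\gamma)$ in case (ii)), for which $p\ge 1$ automatically gives $\ell+1+\beta\le q$, so your initial branch testing whether $k+\alpha+1\le q$ is superfluous; your explicit treatment of the degenerate case $p=q=1$ (where $A_q$ is not defined by~\eqref{eq:defMuck}) is a legitimate edge case that the paper leaves implicit.
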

\subsection{Global approximation result}
The results of Theorems~\ref{theo:Lavdecomp} and~\ref{theo:genapprox} are essentially local, as we require $a$ to be defined on a bigger set than $\Omega$. Although it is not a setback for further developments, one may wonder when a global result can be obtained. Using known criteria for extending Muckenhoupt weights (\cite[p. 438]{Rubio}), we deduce the following global version of Theorem~\ref{theo:genapprox}. 
\begin{corollary}\label{coro:global}
        Let $\Omega \subseteq \rn$ be an open and bounded Lipschitz domain. Recall the definitions of classes $\cZ^\vk$ and $A_r(U)$ from~\eqref{eq:defZ} and~\eqref{eq:defMuck}. Let $\cF$ be defined as in~\eqref{eq:defF} for some numbers $p, q$ satisfying $1 \leq p \leq q < \infty$ and non-negative function $a$ satisfy $a = \bfun \wfun$, where $\bfun \in \SP^{\vk}(\Omega)$, $q \leq p + \vk\max(1, p/N)$ and $\wfun \in L^\infty(\Omega)$ satisfies
        \begin{equation*}
            \sup_{B \subseteq \rn} \left( \dashint_{B \cap \Omega} \wfun \d z \right) \left( \dashint_{B \cap \Omega} \wfun^{-\frac{1}{q-1}} \d z\right)^{q-1} < \infty\,,
        \end{equation*}
        where the supremum is taken over all balls in $\rn$. Then, for any function $u \in W^{1, 1}_0(\Omega)$ such that $\cF[u;\Omega] < \infty$, there exists a sequence $\{u_n\}_n \subset C_c^{\infty}(\Omega)$ such that
    \begin{equation*}
        \cF[u_n;\Omega] \xrightarrow{n \to \infty} \cF[u;\Omega] \quad \text{ and } \quad \|u - u_n\|_{W^{1, 1}(\Omega)} \xrightarrow{n \to \infty} 0\,.
    \end{equation*}
\end{corollary}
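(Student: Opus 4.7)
The plan is to deduce the corollary from Theorem~\ref{theo:genapprox} (assertion i)) by extending the weight $\omega$ from $\Omega$ to an open neighbourhood $U \Supset \Omega$, thereby reducing the global hypothesis on $\omega$ in Corollary~\ref{coro:global} to the local hypothesis required by the theorem. Since the functional $\cF[\cdot;\Omega]$ and the conclusion~\eqref{eq:theo2} are intrinsic to $\Omega$, any extension of $a = \bfun\wfun$ to $U$ that fits the framework of Theorem~\ref{theo:genapprox} will do.

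The key ingredient is the extension theorem for Muckenhoupt weights on bounded Lipschitz domains, from~\cite[p.~438]{Rubio}: a non-negative function $\wfun$ defined on a bounded Lipschitz domain $\Omega$ and satisfying the Muckenhoupt-type inequality with supremum taken over balls in $\rn$ intersected with $\Omega$ — which is precisely the assumption of the corollary — admits an extension $\tilde\wfun : \rn \to [0,\infty)$ belonging to the global Muckenhoupt class $A_q(\rn)$. Fixing any bounded open set $U$ with $\Omega \Subset U$, the restriction of the supremum in~\eqref{eq:defMuck} to balls $B \subseteq U$ is dominated by the global $A_q(\rn)$-quantity, so $\tilde\wfun \in A_q(U)$. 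Moreover, $\tilde\wfun \in L^\infty(\Omega)$ since $\tilde\wfun = \wfun$ there.

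Next, extend $\bfun$ from $\Omega$ to $U$ by an arbitrary non-negative function (for instance, by zero outside $\Omega$); call this extension $\tilde\bfun$. The hypothesis $\tilde\bfun \in \SP^\vk(\Omega)$ required by Theorem~\ref{theo:genapprox} depends only on values on $\Omega$ and is therefore inherited from $\bfun$. Set $\tilde a \coloneqq \tilde\bfun\,\tilde\wfun$ on $U$; this is non-negative and agrees with $a$ on $\Omega$, so $\cF$ and $\tilde\cF$ (built from $\tilde a$) coincide on functions supported in $\overline\Omega$. Applying Theorem~\ref{theo:genapprox} (case i)) to $\tilde a$, $\tilde\bfun$, $\tilde\wfun$, $U$ and the given $u \in W^{1,1}_0(\Omega)$ with $\cF[u;\Omega]<\infty$ yields a sequence $\{u_n\} \subset C_c^\infty(\Omega)$ with $\cF[u_n;\Omega]\to\cF[u;\Omega]$ and $\|u-u_n\|_{W^{1,1}(\Omega)}\to 0$.

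The only non-routine step in this reduction is the Muckenhoupt extension, and this is exactly what the Rubio de Francia condition is designed to ensure: the usual $A_q(\Omega)$ condition from~\eqref{eq:defMuck}, in which the supremum is taken over balls contained in $\Omega$, is in general too weak to permit an extension to a larger domain, so the stronger global supremum appearing in the statement of the corollary is essential. Once the extension of $\wfun$ is in hand, the rest is a direct application of Theorem~\ref{theo:genapprox}.
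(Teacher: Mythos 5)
Your proposal is correct and follows exactly the route the paper intends. The paper does not write out a proof of Corollary~\ref{coro:global}, but introduces it with the remark that the result follows by ``using known criteria for extending Muckenhoupt weights (\cite[p.~438]{Rubio})'' and then applying Theorem~\ref{theo:genapprox}; your argument --- extend $\wfun$ to an $A_q(\rn)$ weight via the Rubio de Francia extension criterion, extend $\bfun$ arbitrarily (the $\cZ^\vk$ hypothesis is only tested on $\Omega$), and invoke Theorem~\ref{theo:genapprox} on any open $U \Supset \Omega$ --- is precisely that reduction, with the details spelled out.
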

Under the same condition as in Corolary~\ref{coro:global}, one can obtain the absence of Lavretiev's phenomenon as in~\eqref{eq:LavAbs}. The reasoning would be precisely the same as the proof of Theorem~\ref{theo:Lavdecomp}, but applying Corollary~\ref{coro:global} instead of Theorem~\ref{theo:genapprox}.
\subsection{Density of smooth functions in Musielak-Orlicz-Sobolev spaces of double phase type}\label{sec:Musielak}
The density of smooth functions in Musielak-Orlicz-Sobolev spaces is closely tied to the study of Lavrentiev's gap. In fact, in the case of the double phase, the conditions for the former coincide with those for the latter. Let us thus define the Musielak–Orlicz–Sobolev space associated with~\eqref{eq:defF} and state the density result. For more information on such spaces, we refer to~\cite{C-b, hahab}.

For a given functional $\cF$, defined as in~\eqref{eq:defF}, and for any open set $\Uset \subset \rn$, we define the space
\begin{equation}\label{eq:defW}
    W(\Uset) \coloneqq \left\{ u \in W^{1, 1}_0(\Uset): \cF[u;\Uset] < \infty \right\}\,,
\end{equation}
equipped with the Luxembourg norm
\begin{equation*}
    \|u\|_{W(\Uset)} \coloneqq \inf \left\{s > 0 :  \int_{\Uset} \left( \left| \frac{\nabla u(x)}{s} \right|^p + a(x)\left| \frac{\nabla u(x)}{s} \right|^q \right)\d x \leq 1 \right\}\,.
\end{equation*}
As integrand of~\eqref{eq:defF} is doubling, one can show that for any sequence of functions $\{u_n\}_n \subset W(U)$ and a function $u \in W(U)$, it holds
\begin{equation*}
    \|u_n - u\|_{W(U)} \xrightarrow{n \to \infty} 0 \quad \iff \quad \cF[u_n - u] \xrightarrow{n \to \infty} 0\,.
\end{equation*}
Thus, we may infer the following corollary from Theorems~\ref{theo:decomp} and~\ref{theo:genapprox}.
\begin{corollary}\label{theo:dense}
    Let $\Omega \subseteq \rn$ be an open and bounded Lipschitz domain and let $\Uset \subseteq \rn$ be such that $\Omega \Subset \Uset$. Let $\cF$ be defined as in~\eqref{eq:defF} for $1 \leq p, q < \infty$, non-negative $a \in C^{k, \alpha}(U)$, $k \in \N$, $\alpha \in (0, 1]$, satisfying $q \leq p + (k+\alpha)\max(1, p/N)$. Then $C_c^{\infty}(\Omega)$ is dense in $W(\Omega)$, defined in~\eqref{eq:defW}, in norm topology.
\end{corollary}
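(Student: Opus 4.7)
The plan is to reduce density in the Luxembourg norm to modular convergence $\cF[u_n - u;\Omega] \to 0$, produce such a sequence via Corollary~\ref{coro:genapp}, and then perform a standard Brezis--Lieb/Vitali upgrade that exploits the doubling of the integrand.

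Concretely, fix $u \in W(\Omega)$. Since $u \in W^{1,1}_0(\Omega)$ with $\cF[u;\Omega] < \infty$ and since $a \in C^{k,\alpha}(\Uset)$ with $q \leq p + (k+\alpha)\max(1, p/N)$, Assertion i) of Corollary~\ref{coro:genapp} applies and delivers $\{u_n\}_n \subset C_c^{\infty}(\Omega)$ with $\cF[u_n;\Omega] \to \cF[u;\Omega]$ and $\|u_n - u\|_{W^{1,1}(\Omega)} \to 0$. Extracting a subsequence (not relabelled), I may also assume $\nabla u_n \to \nabla u$ almost everywhere in $\Omega$.

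To promote this to modular convergence of the difference, denote $H(x,\xi) \coloneqq |\xi|^p + a(x)|\xi|^q$. Convexity of $H(x,\cdot)$ together with the elementary inequality $(s+t)^r \leq 2^{r-1}(s^r+t^r)$ for $r \geq 1$ gives the pointwise bound
\begin{equation*}
H(x, \nabla u_n - \nabla u) \leq 2^{q-1}\bigl(H(x, \nabla u_n) + H(x, \nabla u)\bigr)\,.
\end{equation*}
The left-hand side tends to $0$ a.e., while the right-hand side is non-negative, converges a.e.\ to $2^q H(x, \nabla u)$, and its integrals converge to $2^q \cF[u;\Omega]$ by the previous step. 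The generalized dominated convergence theorem (Pratt's lemma) therefore yields $\cF[u_n - u;\Omega] \to 0$. Combined with the $\Delta_2$-equivalence between modular and Luxembourg-norm convergence recalled just before the corollary, this gives $\|u_n - u\|_{W(\Omega)} \to 0$, which is the claimed density.

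The principal (and conceptually mild) obstacle is the mismatch between what Corollary~\ref{coro:genapp} provides --- convergence of energies $\cF[u_n;\Omega]\to\cF[u;\Omega]$ together with $W^{1,1}$-convergence $\|u_n - u\|_{W^{1,1}}\to 0$ --- and what is needed for density in $W(\Omega)$, namely modular convergence of the difference $u_n - u$ to zero. The Vitali/Brezis--Lieb step above is precisely what bridges this gap, and it is entirely routine thanks to the doubling of the integrand; all the genuine analytic content (decomposition of the weight and Muckenhoupt/$\cZ$-based approximation) has been absorbed into Corollary~\ref{coro:genapp}.
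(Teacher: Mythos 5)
Your argument is correct and is exactly the routine detail the paper leaves implicit when it states that Corollary~\ref{theo:dense} ``may be inferred'' from Theorems~\ref{theo:decomp} and~\ref{theo:genapprox} together with the $\Delta_2$-equivalence of modular and Luxembourg-norm convergence recalled just before. The Pratt/Vitali upgrade from $\cF[u_n]\to\cF[u]$ plus a.e.\ convergence of $\nabla u_n$ (along a subsequence) to $\cF[u_n-u]\to 0$, using the doubling bound $H(x,\nabla u_n-\nabla u)\leq 2^{q-1}(H(x,\nabla u_n)+H(x,\nabla u))$, is precisely the standard bridge the paper has in mind.
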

One of the applications of Theorem~\ref{theo:dense} is that it can be used in the study of PDEs with operators of double phase type. We refer the reader to~\cite[Section 4.1.4]{C-b}, where it is explained how the density result as in~\ref{theo:dense} may be used to prove the existence of renormalized solutions.
\section{Proof of Theorem~\ref{theo:decomp}}\label{sec:proofs1}
We aim to prove that a regular weight can be decomposed into a product of weights, one from the $\mathcal{Z}$ class and the other from the Muckenhoupt class, with suitable parameters. The main ingredients of the proof are Lemma~\ref{lem:Zpart} and Proposition~\ref{prop:Muck}, each one dealing with one part of the decomposition. Lemma~\ref{lem:Zpart} says that the original weight may be forced to belong to the sufficient $\cZ$ class if one perturbs the weight with its derivatives. The proof of this fact essentially consists of Taylor's expansion and Young's inequality. 

In the proof of Proposition~\ref{prop:Muck}, we deal separately with three different types of balls, which we call small, medium-sized, and large. The small ones are essentially the ones on which the function identified in Lemma~\ref{lem:Zpart} may be treated as a constant one. We then show that on each small ball one can find a dominating derivative of the weight --- almost constant on the ball, in the sense of Lemma~\ref{lem:Muck}. This lemma says that if some derivative of the weight is almost constant on a given ball, then one can assert control over an integral of the weight in negative power. The proof involves a careful comparison of the weight to its Taylor polynomial, along with several technical details, particularly Lemma~\ref{lem:poly}.

The medium-sized balls share their characteristics with the small ones, but are also sufficiently large. Using similar techniques as before, we can prove that on those balls, solely the second term of~\eqref{eq:defMuck} may be estimated sufficiently so that the Muckenhoupt condition is satisfied. This observation enables us to cover the last type of balls — large ones — with medium-sized ones, thereby inferring the Muckenhoupt condition.  \newline 

Let us introduce some notation we shall use throughout the proof. For every symmetric multilinear operator $\Aform : (\rn)^{\ell} \to \R$, we denote by $\|\Aform\|$ its operator norm
\begin{equation*}
   \|\Aform \| \coloneqq \sup_{|v| \leq 1} \Aform[v^{\ell}]\,,
\end{equation*}
where $v^{\ell}$ denotes a tuple $(v, v, \dots, v) \in (\rn)^{\ell}$, where $v$ is taken $\ell$ times. Whenever $\ell = 0$, we shall identify $\Lambda$ with a real number, and we shall interpret $\Lambda[v^\ell]$ as $\Lambda$, for any $v \in \rn$. In particular, then $\|\Lambda\| = |\Lambda|$.

Recall that for any open set $\anyopen \subseteq \rn$, $k \in \N$ and $\alpha \in (0, 1]$, we say that function $a : \anyopen \to \R$ belongs to class $C^{k, \alpha}$ for some numbers, if $a$ has all the derivatives up to order $k$ and

\begin{equation*}
    [D^ka]_{C^{0, \alpha}(\anyopen)} = \sup_{x, y \in \anyopen} \frac{\|D^ka(x) - D^ka(y)\|}{|x-y|^{\alpha}} < \infty\,.
\end{equation*}

We shall now proceed with proving several facts needed to conclude Theorem~\ref{theo:decomp}. We start with the lemma which identifies the first factor of $a$ in Theorem~\ref{theo:decomp}.
\begin{lemma}\label{lem:Zpart}
    Let $a \in C^{k, \alpha}(\Uset)$ for an open and bounded set $\Uset$. We define function $\bfun$ as
        \begin{equation}\label{eq:defb}
        \bfun(x) \coloneqq \sum_{i=0}^{k} \|D^i a(x)\|^{\frac{k+\alpha}{k + \alpha - i}}\,.
    \end{equation}
    Then, for any open $\Omega$ such that $\Omega \Subset \Uset$, it holds $\bfun \in \SP^{k+\alpha}(\Omega)$.
\end{lemma}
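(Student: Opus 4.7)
The plan is to deduce the $\cZ^{k+\alpha}$ inequality for $\bfun$ by a combination of Taylor's expansion with an $\alpha$-Hölder remainder for $D^k a$, followed by Young's inequality with carefully tuned conjugate exponents. First I would reduce to the case of points $x, y \in \Omega$ that are close enough to justify Taylor's formula. Since $\Omega \Subset \Uset$, there is $\delta > 0$ such that the segment $[x,y]$ lies inside $\Uset$ whenever $x, y \in \Omega$ and $|x-y| < \delta$. For pairs with $|x-y| \geq \delta$, boundedness of each $\|D^i a\|$ on $\overline{\Omega}$ (a consequence of $a \in C^{k,\alpha}(\Uset)$) makes the inequality trivial after absorbing the sup-bound of $\bfun$ into $|x-y|^{k+\alpha} \geq \delta^{k+\alpha}$. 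So the interesting case is $|x-y| < \delta$.

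For such $x, y$, I would apply Taylor's theorem to $D^i a$, which lies in $C^{k-i,\alpha}$: for every $i \in \{0, \dots, k\}$,
\begin{equation*}
    \|D^i a(x)\| \leq \sum_{j=0}^{k-i} \tfrac{1}{j!} \|D^{i+j} a(y)\|\, |x-y|^j + c\,[D^k a]_{C^{0,\alpha}}\, |x-y|^{k-i+\alpha}\,.
\end{equation*}
Setting $\gamma_i \coloneqq \tfrac{k+\alpha}{k+\alpha-i}$ and raising to the power $\gamma_i$, the standard $(a+b)^r \leq c(a^r + b^r)$ inequality yields
\begin{equation*}
    \|D^i a(x)\|^{\gamma_i} \leq c \sum_{j=0}^{k-i} \|D^{i+j} a(y)\|^{\gamma_i} |x-y|^{j \gamma_i} + c\,|x-y|^{k+\alpha}\,,
\end{equation*}
since $(k-i+\alpha)\gamma_i = k+\alpha$.

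The main point is to convert each cross term (for $j \geq 1$) into the shape of a summand of $\bfun(y)$ plus a clean power of $|x-y|$. I would apply Young's inequality with the conjugate exponents $p = \tfrac{k+\alpha-i}{k+\alpha-(i+j)}$ and $q = \tfrac{k+\alpha-i}{j}$; these are chosen precisely so that the two resulting terms are $\|D^{i+j} a(y)\|^{\gamma_{i+j}}$ and $|x-y|^{k+\alpha}$, respectively. A direct check gives $\tfrac{1}{p}+\tfrac{1}{q}=1$, and the constraints $j \geq 1$ and $i+j \leq k$, together with $\alpha > 0$, ensure $p, q > 1$. Hence
\begin{equation*}
    \|D^{i+j} a(y)\|^{\gamma_i} |x-y|^{j \gamma_i} \leq c\,\|D^{i+j} a(y)\|^{\gamma_{i+j}} + c\,|x-y|^{k+\alpha}\,.
\end{equation*}

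Summing over $i \in \{0, \dots, k\}$ and $j \in \{0, \dots, k-i\}$ reindexes to a sum over $i+j \in \{0, \dots, k\}$, so the right-hand side is controlled by $c\,\bfun(y) + c\,|x-y|^{k+\alpha}$, yielding $\bfun \in \cZ^{k+\alpha}(\Omega)$. I expect the bookkeeping in selecting Young's exponents to be the only nontrivial step; the Taylor estimate and the far-away case are essentially routine.
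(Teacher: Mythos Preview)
Your proof is correct and follows essentially the same route as the paper: reduce to nearby points, apply Taylor's expansion with an $\alpha$-H\"older remainder for each $D^i a$, and use Young's inequality with the exponents $\tfrac{k+\alpha-i}{k+\alpha-(i+j)}$ and $\tfrac{k+\alpha-i}{j}$ to turn the cross terms into summands of $\bfun(y)$ plus $|x-y|^{k+\alpha}$. The only cosmetic difference is that the paper applies Young's inequality \emph{before} raising to the power $\gamma_i$ (and then raises the resulting sum), whereas you raise first and then apply Young; the exponents match and the arguments are interchangeable.
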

\begin{proof}
    We have to prove that there exists a constant $C > 0$ such that for any $x, y \in \Omega$, we have
    \begin{equation}\label{eq:margoal1}
        \bfun(x) \leq C(\bfun(y) + |x-y|^{k+\alpha})\,.
    \end{equation}
    Let us denote $r \coloneqq \dist(\partial \Uset, \Omega)$ and observe that whenever $|x-y| \geq r$ and $C \geq \frac{\|\bfun\|_{L^{\infty}}}{r^{k+\alpha}}$, then~\eqref{eq:margoal1} holds. Therefore, we may focus on the case $|x - y| < r$. As for all $x \in \Omega$ we have that $B(x, r) \subset \Uset$, we can reformulate~\eqref{eq:margoal1} into the assertion that for all $x \in \Omega$ and $h \in B(0, r)$, it holds
    \begin{equation*}
        \bfun(x + h) \leq C(\bfun(x) + |h|^{k+\alpha})\,.
    \end{equation*}
    Let us thus fix $x \in \Omega$, $h \in B(0, r)$, and, moreover, $\ell \in \{0, \dots, k\}$. By Taylor expansion, there exists $\xi \in [x, x + h]$ such that
    \begin{align*}
        D^\ell a(x+h) &= \sum_{i=\ell }^{k-1} \frac{D^ia(x)[h^{i-\ell  }]}{(i-\ell  )!} + \frac{D^ka(\xi)[h^{k-\ell }]}{(k-\ell )!}\\
        &= \sum_{i=\ell }^{k} \frac{D^ia(x)[h^{i-\ell  }]}{(i-\ell  )!} + \frac{(D^ka(\xi) - D^ka(x))[h^{k-\ell }]}{(k-\ell )!}\,,
    \end{align*}
    which gives
    \begin{equation}\label{eq:mar1}
        \|D^\ell a(x+h)\| \leq \sum_{i=\ell }^{k} \|D^ia(x)\||h|^{i-\ell  } + \Ca |h|^{k-\ell+\alpha}\,,
    \end{equation}
    where $\Ca $ is H\"older constant of $D^ka \in C^{0, \alpha}$. Applying Young's inequality below the sum in the right-hand side of~\eqref{eq:mar1}, we get
    \begin{equation*}
        \|D^\ell a(x+h)\| \leq \sum_{i=\ell }^{k} \|D^ia(x)\|^{\frac{k+\alpha-\ell }{k+\alpha - i}} + (\Ca  + k)|h|^{k-\ell+\alpha}\,.
    \end{equation*}
    Applying power $\frac{k+\alpha}{k+\alpha-\ell }$ to both sides of the last display, we get for some $c = c(\Ca, k, \alpha) > 0$ that
    \begin{equation}\label{eq:mar2}
        \|D^\ell a(x+h)\|^{\frac{k+\alpha}{k+\alpha-\ell }} \leq c\left( \sum_{i=\ell }^{k} \|D^ia(x)\|^{\frac{k+\alpha}{k+\alpha - i}} + |h|^{k+\alpha} \right)\,.
    \end{equation}
    Summing~\eqref{eq:mar2} over all $\ell \in \{0, 1, \dots, k\}$, we get
    \begin{align*}
        \bfun(x+h) = \sum_{i=0}^{k} \|D^ia(x+h)\|^{\frac{k+\alpha}{k + \alpha - i}} &\leq (k+1)c\left( \sum_{i=0}^{k} \|D^ia(x)\|^{\frac{k+\alpha}{k+\alpha - i}} + |h|^{k+\alpha} \right)\\
        &= c(\bfun(x) + |h|^{k+\alpha})\,,
    \end{align*}
    which is the desired result.
\end{proof}
Lemma~\ref{lem:Zpart} clearly shows also what $\wfun$ in Theorem~\ref{theo:decomp} shall look like. To prove that such an $\wfun$ satisfies a suitable Muckenhoupt condition, we start with the following technical lemma.
\begin{lemma}\label{lem:poly}
Let $k \in \N$ and let $P$ be a polynomial of degree $k$, such that
\begin{equation*}
    P(t) = \sum_{i=0}^{k} \Ai_it^i\,, \text{ where $\Ai_k > 0\,.$}
\end{equation*}
Assume that $P$ is non-negative on an interval $[0, T]$ for some $T > 0$. Fix $\ve \in (0, 1]$. Then, there exists a constant $c_{k, \ve} > 0$, depending only on $k$ and $\ve$, such that there exists a finite family of pairwise disjoint intervals $\{[\aii_i, \bii_i]\}_{i=1}^{m}$ included in $[0, T]$, such that $\bii_i \leq c_{k, \ve}\aii_i$ for every $i = 1, \dots, m$, and
\begin{equation*}
    t \not\in \bigcup_{i=1}^{m} [\aii_i, \bii_i] \implies \sum_{i=0}^{k-1} \Ai_it^i + \ve \Ai_kt^k \geq \frac{\ve}{2^{k}}\Ai_kt^k\,.
\end{equation*}
\end{lemma}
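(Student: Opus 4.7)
The plan is to recast the inequality as a sign condition on an auxiliary polynomial and then cover its negativity set by intervals of bounded aspect ratio. Setting $c := \ve(1 - 2^{-k}) \in (0, 1]$ and
\[
g(t) := \sum_{i=0}^{k-1} \Ai_i t^i + c\,\Ai_k t^k = P(t) - (1 - c)\,\Ai_k t^k,
\]
the desired inequality is equivalent to $g(t) \geq 0$, so it suffices to cover $B := \{t \in [0, T] : g(t) < 0\}$ by a finite family $\{[\aii_i, \bii_i]\} \subseteq [0, T]$ with $\bii_i/\aii_i \leq c_{k, \ve}$.

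I would first record two structural consequences of $P \geq 0$. From $g = P - (1 - c)\,\Ai_k t^k$ one gets the pointwise lower bound $g(t) \geq -(1 - c)\,\Ai_k t^k$ on $[0, T]$. Moreover, letting $i_0 := \min\{i : \Ai_i \neq 0\}$, the hypothesis $P \geq 0$ near $0$ forces $\Ai_{i_0} > 0$, and the leading behavior of $g$ at $0$ is $\Ai_{i_0} t^{i_0}$ when $i_0 < k$ or $c\,\Ai_k t^k$ when $i_0 = k$, in either case positive. Hence $B$ is bounded away from $0$. As the negativity set of a polynomial of degree $\leq k$ with positive leading coefficient $c\,\Ai_k$, $B$ decomposes into finitely many connected open components in $(0, T]$; I would take the candidate family $\{[\aii_i, \bii_i]\}$ to be their closures, truncating the rightmost one at $T$ if necessary.

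The aspect-ratio bound then comes from a factorization argument applied to each component. Writing $g(t) = c\,\Ai_k (t - \aii_i)(t - \bii_i)\,q_i(t)$ with $q_i$ monic of degree $k - 2$, the sign of $(t - \aii_i)(t - \bii_i)$ together with $g \leq 0$ on $[\aii_i, \bii_i]$ forces $q_i \geq 0$ there. The lower bound on $g$ then translates into
\[
(t - \aii_i)(\bii_i - t)\,q_i(t) \leq \frac{1 - c}{c}\,t^k, \qquad t \in [\aii_i, \bii_i].
\]
Rescaling $t = \aii_i u$ reduces the question to bounding $\rho := \bii_i/\aii_i$ for a monic degree-$k$ polynomial on $[1, \rho]$ that vanishes at both endpoints and is sandwiched between $-\tfrac{1 - c}{c} u^k$ and $0$. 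An extremal-polynomial argument of Chebyshev type, applied after normalization of $[1, \rho]$ to $[-1, 1]$, converts this into an upper bound on $\rho$ depending only on $k$ and $c = \ve(1 - 2^{-k})$. Boundary components of the form $(\aii, T]$ are handled by replacing $\bii_i$ with the next zero of $g$ past $T$ (which exists because $g$ has positive leading coefficient and $g(T) < 0$), applying the same bound to that virtual endpoint, and then truncating back to $T$.

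The hard part I expect is this final quantitative step: a direct pointwise lower bound on $q_i$ can fail because $q_i$ may vanish to even order inside $[\aii_i, \bii_i]$ or possess near-real complex roots. The remedy is to apply the Chebyshev extremal inequality to $g/(c\,\Ai_k)$ itself, viewed as a monic polynomial of degree $k$ on $[\aii_i, \bii_i]$: this produces a lower bound on $\max_{[\aii_i, \bii_i]} |g|$ which, compared with the envelope $|g(t)| \leq (1 - c)\,\Ai_k t^k$ on the same interval, forces the required control on $\bii_i/\aii_i$ in terms of $k$ and $\ve$ alone.
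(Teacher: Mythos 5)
Your reformulation in terms of the auxiliary polynomial $g(t) = \sum_{i=0}^{k-1}\lambda_i t^i + c\lambda_k t^k$ with $c = \ve(1 - 2^{-k})$, the observation that $P \ge 0$ forces $g(t) \ge -(1-c)\lambda_k t^k$, and the reduction to bounding the aspect ratio of the connected components of $\{g<0\}$ are all correct, and this is a genuinely different route from the paper, which instead runs an induction on $k$ built around the splitting of the lower-order coefficients into positive and negative parts $P_1, P_2$ and proves the stronger inductive claim $P_1 - P_2 + \ve\lambda_k t^k \ge \tfrac{\ve}{2^k}(P_1 + \lambda_k t^k)$ away from the bad intervals.

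However, the final quantitative step does not close, and the gap is not a mere technicality. You propose to compare the Chebyshev lower bound for a monic degree-$k$ polynomial on $[\aii_i,\bii_i]$, namely
\[
\max_{[\aii_i,\bii_i]}\lvert h\rvert \ge 2\left(\tfrac{\bii_i - \aii_i}{4}\right)^k,
\]
with the envelope $\lvert h(t)\rvert \le \tfrac{1-c}{c}\,t^k \le \tfrac{1-c}{c}\,\bii_i^k$, which after division yields
\[
\left(1 - \tfrac{\aii_i}{\bii_i}\right)^k \;\le\; \frac{(1-c)\,2^{2k-1}}{c}.
\]
Since $\ve\le 1$ gives $1-c \ge 2^{-k}$ and $c < 1$, the right-hand side is at least $2^{k-1}\ge 1$, so the inequality holds for every value of $\aii_i/\bii_i$ and provides no constraint whatsoever. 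The obstruction is structural: the Chebyshev extremal inequality controls only the global sup on the interval, whereas the envelope $\tfrac{1-c}{c}\,t^k$ is much smaller near the left endpoint $\aii_i$ than at $\bii_i$, and this pointwise smallness is precisely what has to be exploited. A concrete way to see the information loss: for $k=2$, $h(t)=(t-1)(t-M)$, $P = ch + (1-c)t^2$, nonnegativity of $P$ forces $M \lesssim 4/c$ (discriminant $\le 0$), but the constraint comes from the minimum of $P$ near $t\approx cM/2$, not from $\max\lvert h\rvert$, and the Chebyshev comparison above is satisfied for all $M$. Passing to subintervals such as $[\aii_i, 2\aii_i]$ or normalizing to $[-1,1]$ does not repair this either, as the same computation remains vacuous. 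To salvage your route you would need some weighted or pointwise version of the extremal comparison (a Remez- or Markov-type inequality adapted to the weight $t^k$), which is a substantially different tool than what you invoke; as written, the lemma's conclusion does not follow.
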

\begin{proof}
    Given $P$, let us denote polynomials $P_1$ and $P_2$ via
    \begin{equation*}
        P_1(t) \coloneqq \sum_{i=0}^{k-1} \max(\Ai_i, 0)t^i\,, \quad P_2(t) \coloneqq \sum_{i=0}^{k-1} \max(-\Ai_i, 0)t^i\,,
    \end{equation*}
    so that
    \begin{equation*}
        \sum_{i=0}^{k-1} \Ai_it^i = P_1(t) - P_2(t) \quad \text{ and } \quad P(t) = P_1(t) - P_2(t) + \Ai_kt^k\,.
    \end{equation*}
    Proceeding by induction with respect to $k$, we shall prove a stronger result. Namely, that there exists a finite family of pairwise disjoint intervals $\{[\aii_i, \bii_i]\}_{i=1}^{m}$ included in $[0, T]$, such that $\bii_i \leq c_{k, \ve}\aii_i$ for $i = 1, \dots, m$, and
\begin{equation}\label{eq:goalpoly}
    t \not\in \bigcup_{i=1}^{m} [\aii_i, \bii_i] \implies P_1(t) - P_2(t) + \ve \Ai_kt^k \geq \frac{\ve}{2^{k}}\left( P_1(t) + \Ai_kt^k \right)\,.
\end{equation}
At first, observe that $0 \leq P(0) = \Ai_0$, which means that cases $k = 0$ and $k = 1$ are trivial. Suppose we have the result for $k - 1 \geq 1$, and proceed with proving the result for $k$. 

We may assume that $P_1$ is positive on $(0, +\infty)$. Indeed, if it is not, then it is identically equal to $0$, and so on $[0, T]$ we have $0 \leq P(t) = \Ai_kt^k - P_2(t)$, which may be true only if $P_2$ is identically equal $0$, as $P_2$ has only non-negative coefficients and is of degree lesser than $k$. Thus, we have $P(t) = \Ai_k t^k$, for which~\eqref{eq:goalpoly} is trivially satisfied. We shall thus assume from now on that $P_1$ is positive on $(0, +\infty)$.

We notice that the function $t \mapsto P_1(t) - \Ai_kt^k$ has exactly one root on $(0, +\infty)$. Indeed, the function is non-negative close to $0$, because of the positivity of $P_1$, and is negative for sufficiently large $t$, so there exists the smallest $t_0 > 0$ such that $P_1(t_0) = \Ai_kt_0^k$. As $P_1$ has non-negative coefficients and is of degree at most $k-1$, for all $c > 1$, we have
    \begin{equation*}
        P_1(ct) - \Ai_k(ct)^k \leq c^{k-1}P_1(t) - c^k\Ai_kt^k < c^k(P_1(t)-\Ai_kt^k) = 0\,.
    \end{equation*}
    Therefore, $t_0$ is the unique root of the function $t \mapsto P_1(t) - \Ai_kt^k$ on $(0, +\infty)$. In particular, for $t \geq t_0$ we have $P_1(t) \leq \Ai_kt^k$. For each $t \in [t_0, T]$, we then have
    \begin{equation*}
        0 \leq P(t) = P_1(t) + \Ai_kt^k - P_2(t) \leq 2\Ai_kt^k - P_2(t)\,,
    \end{equation*}
    i.e., $P_2(t) \leq 2\Ai_kt^k$. Note that $P_2$ is of degree lower than $k-1$, and has non-negative coefficients. Therefore
    \begin{equation*}
        P_2(4\ve^{-1}t) \leq 4^{k-1}\ve^{1-k}P_2(t) \leq 2 \cdot 4^{k-1}\ve^{1-k}\Ai_kt^k = \frac{\ve}{2}\Ai_k(4\ve^{-1}t)^k\,.
    \end{equation*}
    Thus,
    \begin{equation*}
        P_1(4\ve^{-1}t) - P_2(4\ve^{-1}t) + \ve \Ai_k(4\ve^{-1}t)^k \geq P_1(4\ve^{-1}t) + \frac{\ve}{2}\Ai_k(4\ve^{-1}t)^k \geq \frac{\ve}{2^k}\left(P_1(4\ve^{-1}t) + \Ai_k(4\ve^{-1}t)^k \right)\,.
    \end{equation*}
    Equivalently
    \begin{equation}\label{eq:0807-8}
        P_1(t) - P_2(t) + \ve \Ai_kt^k \geq \frac{\ve}{2^k}\left(P_1(t) + \Ai_kt^k \right)\, \quad \text{for $t \in [4\ve^{-1}t_0, T]$.}
    \end{equation}
    Let us remark that we interpret interval $[s, \tau]$ as $\emptyset$ whenever $s > \tau$.
    
    Now, let us take $t_1$ to be equal to $t_0$ if $t_0 \in [0, T]$, and to $T$ otherwise. Recall that $P_1(t) \geq \Ai_kt^k$ for $t \leq t_0$. We now focus on $t \leq t_1$. Then, $0 \leq P(t) \leq 2P_1(t) - P_2(t)$, which implies $P_2(t) \leq 2P_1(t)$. Let $\ell$ be the degree of polynomial $P_1$, so that $\Ai_l \geq 0$ and $\Ai_{\ell+1}, \dots \Ai_{k-1} < 0$. Note that
    \begin{equation}\label{eq:0608-1}
        - \sum_{i=\ell+1}^{k-1} \Ai_it^i \leq P_2(t) \leq 2P_1(t)\,.
    \end{equation}
    On the right-hand side, we have a polynomial of order $\ell$, with non-negative coefficients, while on the left-hand side, a polynomial with non-negative coefficients and with all coefficients up to order $\ell$ equal $0$. This in conjunction with~\eqref{eq:0608-1} means that for every $c \leq 1$, we have
    \begin{equation*}
        -\sum_{i=\ell+1}^{k-1} \Ai_i(ct)^{i} \leq c^{\ell+1} \left(- \sum_{i=\ell+1}^{k-1} \Ai_it^i \right) \leq 2c^{\ell+1}P_1(t) \leq 2cP_1(ct)\,.
    \end{equation*}
    If we take $c = \frac{\ve}{2^{k+1}}$, we obtain that on interval $[0, \tfrac{\ve}{2^{k+1}}t_1]$, it holds
    \begin{equation}\label{eq:0807-7}
        -\sum_{i=\ell+1}^{k-1} \Ai_it^{i} \leq \frac{\ve}{2^{k}}P_1(t)\,.
    \end{equation}
    We now apply the inductive hypothesis to the polynomial $\sum_{i=0}^{\ell} \Ai_it^i$ on interval $[0, \tfrac{\ve}{2^{k+1}}t_1]$. We get family $\{[\aii_i, \bii_i]\}_{i=1}^{m}$ of pairwise disjoint intervals in $[0, \tfrac{\ve}{2^{k+1}}t_1]$, satisfying $\bii_i \leq c_{k-1, \ve}\aii_i$ and such that
    \begin{equation*}
        t \not\in \bigcup_{i=1}^{m} [\aii_i, \bii_i] \implies \sum_{i=0}^{\ell-1} \Ai_it^i + \ve \Ai_lt^{\ell} \geq \frac{\ve}{2^{\ell}}P_1(t) \geq \frac{\ve}{2^{k-1}}P_1(t)\,.
    \end{equation*}
    This in conjunction with~\eqref{eq:0807-7} means that whenever $t \not\in \bigcup_{i=1}^{m} [\aii_i, \bii_i]$, then
    \begin{align}\label{eq:0807-9}
        P_1(t) - P_2(t) + \ve \Ai_kt^k &= \sum_{i=0}^{\ell} \Ai_it^i - \sum_{i=\ell+1}^{k-1} \Ai_it^i + \ve \Ai_kt^k \nonumber\\
        &\geq \frac{\ve}{2^{k-1}}P_1(t) - \frac{\ve}{2^{k}}P_1(t) + \ve \Ai_kt^k \geq \frac{\ve}{2^{k}}\left( P_1(t) + \Ai_kt^k  \right)\,.
    \end{align}
    Together,~\eqref{eq:0807-8} and~\eqref{eq:0807-9} imply that
    \begin{equation*}
        t \not\in \bigcup_{i=1}^{m} [\aii_i, \bii_i] \cup \left[ \frac{\ve}{2^{k+1}}t_1, \min(4\ve^{-1}t_1, T) \right] \implies P_1(t) - P_2(t) + \ve \Ai_kt^k \geq \frac{\ve}{2^{k}}\left(P_1(t) + \Ai_kt^k \right)
    \end{equation*}
    We define a new family of intervals as follows. If $\bii_i \neq \frac{\ve}{2^{k+1}}t_1$ for all $i \in \{1, \dots, m\}$, then we simply take $\{[\aii_i, \bii_i]\}_{i=1}^{m}$ together with interval $\left[ \frac{\ve}{2^{k+1}}t_1, \min(4\ve^{-1}t_1, T) \right]$. We then obtain the conclusion as long as $c_{k, \ve} \geq \max(c_{\ell, \ve}, \ve^{-2}2^{k+3})$. On the other hand, if there exists $i$ such that $\bii_i = \frac{\ve}{2^{k+1}}t_1$, then we take family
    \begin{equation*}
        \{[\aii_j, \bii_j]\}_{j=1}^{i-1} \cup \big\{\left[ \aii_i, \min(4\ve^{-1}t_1, T) \right]\big\} \cup \{[\aii_j, \bii_j]\}_{j=i+1}^{m}\,.
    \end{equation*}
    Then, we get the conclusion as long as $c_{k, \ve} \geq c_{\ell, \ve}\ve^{-2}2^{k+3}$, since
    \begin{equation*}
        \frac{\min(4\ve^{-1}t_1, T)}{\aii_i}  \leq \frac{\bii_i}{\aii_i} \cdot \frac{4\ve^{-1}t_1}{\bii_i}  \leq c_{\ell, \ve}\ve^{-2}2^{k+3}\,.
    \end{equation*}
    In any case, we get the result with $c_{k, \ve} = \max_{i \leq k-1} c_{i, \ve}\ve^{-2}2^{k+3}$.
\end{proof}
We also prove the following minor lemma.
\begin{lemma}\label{lem:minima}
    Let $I \subseteq \R$ be an open interval and let the function $g : I \to \R$ be $\ell$ times differentiable, $\ell \geq 1$, and let $g^{(\ell)}$ be strictly positive. Then, the function $g$ has at most $\ell-1$ local minima.
\end{lemma}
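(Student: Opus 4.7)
The plan is to translate the count of local minima of $g$ into a count of zeros of $g'$, and then bound the latter via a classical Rolle-type argument.

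First I would isolate the following auxiliary statement and prove it by induction on $k$: if $h\colon I\to\R$ is $k$ times differentiable on an open interval with $h^{(k)}$ strictly positive, then $h$ has at most $k$ distinct zeros in $I$. The base case $k=1$ is immediate, since $h'>0$ forces $h$ to be strictly increasing, hence to have at most one zero. For the inductive step, assume the claim for $k-1$; if $h$ had $k+1$ distinct zeros, then Rolle's theorem applied to consecutive pairs would supply $k$ distinct zeros of $h'$, but $(h')^{(k-1)}=h^{(k)}>0$, so by the inductive hypothesis $h'$ can have at most $k-1$ zeros, a contradiction.

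Then I would apply this auxiliary fact to $h=g'$ with order $k=\ell-1$. The case $\ell=1$ is dispatched separately: $g'>0$ makes $g$ strictly increasing on the open interval $I$, so $g$ has no local minimum at all, matching the bound $\ell-1=0$. For $\ell\ge 2$, the auxiliary fact gives that $g'$ has at most $\ell-1$ distinct zeros in $I$. Since $I$ is open, every local minimum of $g$ is an interior point at which $g$ is differentiable and attains a local minimum, hence $g'$ must vanish there; distinct local minima thus furnish distinct zeros of $g'$, and the bound $\ell-1$ transfers directly.

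No substantive obstacle arises, as the whole argument is a standard reduction via Rolle's theorem. The only point worth flagging is that the openness of $I$ is used to guarantee that every local minimum lies in the interior and therefore forces $g'$ to vanish; without this, a minimum at an endpoint could escape the count.
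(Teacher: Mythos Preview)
Your proof is correct. It differs from the paper's in the inductive setup: the paper inducts directly on the count of local minima, arguing that if $g^{(\ell+1)}>0$ then by the inductive hypothesis $g'$ has at most $\ell-1$ local minima, hence $g'$ changes sign at most $\ell$ times, hence $g$ has at most $\ell$ local minima. You instead isolate an auxiliary zero-count lemma via Rolle's theorem and then invoke Fermat's stationary-point criterion to pass from local minima of $g$ to zeros of $g'$. Your route is the more classical one and arguably cleaner, since the implication ``at most $\ell-1$ local minima of $g'$ $\Rightarrow$ at most $\ell$ sign changes of $g'$'' in the paper's argument, while true, requires a moment's thought of its own; your Rolle-based count bypasses this step entirely. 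Both arguments are equally elementary and yield the same bound.
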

\begin{proof}
    We proceed by induction with respect to $\ell$. Clearly, in the case of $\ell = 1$, the function $g$ is strictly increasing and thus has no local minima.

    Assume that the statement holds for some $\ell \in \N$, $\ell \geq 1$. Take function $g : I \to \R$ differentiable $\ell+ 1$ times and such that $g^{(\ell+1)}$ is strictly positive. The inductive assumption implies that $g'$ has at most $\ell-1$ local minima. This means that $g'$ can change its sign at most $\ell$ times, and thus, $g$ itself can have at most $\ell$ local minima.
\end{proof}
The following lemma says that if some derivative of the weight is almost constant on some ball, then an integral resembling the one appearing in the Muckenhoupt condition on this ball can be controlled.
\begin{lemma}\label{lem:Muck}
    Let $B \subseteq \rn$ be a ball of radius $r > 0$, let $\ell \in \N$, $\ve \in (0, 1]$ and let $a \in C^{\ell}(\sqrt{N}B)$ be non-negative function. Assume further that there exists a vector $v \in \rn$, $|v| = 1$, and $\Mbound> 0$ such that for all $z \in \sqrt{N}B$ it holds
    \begin{equation}\label{eq:Kbounds}
        \ve \Mbound \leq D^\ell a(z)[v^{\ell}] \leq \Mbound\,.
    \end{equation}
    Then, for every $\betapow > \ell$, there exists a constant $c = c(\beta, N, \ve)$ such that
    \begin{equation*}
        \left( \dashint_{B} a^{-\frac{1}{\betapow}}\d z \right)^{\betapow} \leq c\Mbound^{-1}r^{-\ell}\,.
    \end{equation*}
\end{lemma}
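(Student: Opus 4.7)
The plan is to reduce the $N$-dimensional estimate to a one-dimensional one via Fubini in the direction $v$, then prove a sublevel-set bound for the 1D slice using divided differences together with Lemma~\ref{lem:minima}.

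First, by an orthogonal change of coordinates I would assume $v = e_1$ and that $B$ is centred at the origin. Since the cube $Q = [-r, r]^N$ satisfies $B \subseteq Q \subseteq \sqrt{N}\,B$, Fubini's theorem yields
\begin{equation*}
    \int_B a^{-1/\betapow}\,\d z \;\leq\; \int_Q a^{-1/\betapow}\,\d z \;=\; \int_{[-r,r]^{N-1}}\!\left( \int_{-r}^{r} g_y(t)^{-1/\betapow}\dt\right)\dy,
\end{equation*}
with $g_y(t) := a(t e_1 + (0, y))$ non-negative and $g_y^{(\ell)}(t) \in [\ve \Mbound, \Mbound]$ for $t \in [-r, r]$. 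Rescaling $t = rs$ and defining $h(s) := g_y(rs)/(\Mbound r^\ell)$ reduces the task to establishing the normalised claim
\begin{equation*}
    \int_{-1}^{1} h(s)^{-1/\betapow}\ds \;\leq\; c(\ell, \ve, \betapow)
\end{equation*}
for every non-negative $h \in C^\ell([-1,1])$ with $h^{(\ell)} \in [\ve, 1]$.

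The heart of the proof is a sublevel-set estimate: $|E_\delta| \leq c_1 \delta^{1/\ell}$ for $E_\delta := \{s \in [-1,1] : h(s) < \delta\}$, with $c_1 = c_1(\ell, \ve)$. Since $h^{(\ell)} \geq \ve > 0$, Lemma~\ref{lem:minima} gives at most $\ell - 1$ local minima of $h$ in $(-1, 1)$, hence $E_\delta$ has at most $\ell + 1$ connected components (including up to two touching the boundary of $[-1,1]$). On any component $I \subseteq E_\delta$, I would pick $\ell + 1$ equally spaced points $t_0 < \dots < t_\ell \in I$ with spacing $|I|/\ell$, and apply the mean value theorem for divided differences to find $\xi \in I$ with
\begin{equation*}
    \ve \;\leq\; h^{(\ell)}(\xi) \;=\; \ell!\,[t_0, \dots, t_\ell; h] \;=\; \ell!\sum_{i=0}^\ell\frac{h(t_i)}{\prod_{j\neq i}(t_i - t_j)}.
\end{equation*}
Bounding the sum on the right by using $h(t_i) < \delta$ and the equally-spaced geometry (so that $\prod_{j\neq i}|t_i - t_j| = i!(\ell-i)!(|I|/\ell)^\ell$) yields $\ve \leq 2^\ell \delta \ell^\ell / |I|^\ell$, i.e.\ $|I| \leq 2\ell(\delta/\ve)^{1/\ell}$. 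Summing over the $\ell + 1$ components gives $|E_\delta| \leq 2\ell(\ell+1)(\delta/\ve)^{1/\ell}$, as claimed.

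The conclusion then follows from the distribution function identity: since $\betapow > \ell$,
\begin{equation*}
    \int_{-1}^{1} h(s)^{-1/\betapow}\ds \;=\; \int_{0}^{\infty} |E_{s^{-\betapow}}|\ds \;\leq\; \int_{0}^{\infty} \min\bigl(2,\, c_1 s^{-\betapow/\ell}\bigr)\ds \;<\; \infty,
\end{equation*}
with a finite constant depending only on $\ell, \ve, \betapow$. Chaining this through the rescaling and Fubini yields $(\dashint_B a^{-1/\betapow}\,\d z)^{\betapow} \leq c\,\Mbound^{-1} r^{-\ell}$, as desired. The main obstacle is the sublevel-set estimate, whose proof hinges on the interplay between the positive lower bound on $h^{(\ell)}$ and divided-difference estimates forcing short connected components of $\{h < \delta\}$. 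An alternative closer to the author's outline would apply Lemma~\ref{lem:poly} to the upper Taylor polynomial of $h$ (which is non-negative since $h \geq 0$) on each monotone piece, then use the resulting ``bad interval'' structure to localise where a pointwise bound $h \gtrsim \tau^\ell$ can fail, and propagate a lower bound into the bad intervals via a Glaeser-type inequality starting from their endpoints.
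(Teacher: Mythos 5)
Your proof is correct, but it takes a genuinely different route from the paper in the crucial one-dimensional step. Both arguments start identically: reduce to the cube $Q$ inscribed in $\sqrt{N}B$ with sides aligned with $v$, slice via Fubini along $v$, and rescale so that the task becomes bounding $\int g^{-1/\betapow}$ for a nonnegative 1D function $g$ with $g^{(\ell)} \in [\ve\Mbound, \Mbound]$. At this point the paper splits the interval into the monotone pieces supplied by Lemma~\ref{lem:minima}, applies Lemma~\ref{lem:poly} to the Taylor polynomial at each local minimum to obtain a pointwise lower bound of the form $g(t_i+h) \gtrsim \Mbound h^{\ell}$ away from a finite union of controlled ``bad intervals,'' and then propagates this lower bound through the bad intervals using monotonicity of $g$ on each piece. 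You instead prove the sublevel-set estimate $\lvert\{g<\delta\}\rvert \leq c(\ell,\ve)(\delta/\Mbound)^{1/\ell}$ directly: on each connected component of the sublevel set you pick $\ell+1$ equally spaced nodes and invoke the mean value form of divided differences, $[t_0,\dots,t_\ell;g] = g^{(\ell)}(\xi)/\ell!$, which together with $g(t_i)<\delta$ forces the component to be short; Lemma~\ref{lem:minima} is used only to bound the number of components by $\ell+1$. You then integrate via the layer-cake formula, which is where the hypothesis $\betapow>\ell$ enters. Your route bypasses Lemma~\ref{lem:poly} entirely, avoids the monotonicity propagation step, and is arguably more streamlined; it also connects the statement to the classical divided-difference sublevel-set estimates familiar from oscillatory-integral theory. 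One small remark: the resulting constant depends on $\ell$ as well as $\betapow$, $N$, $\ve$, but since $\ell < \betapow$ and $\ell\in\N$, one may take a maximum over the finitely many admissible $\ell$ to match the stated dependence.
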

\begin{proof} The result is trivial when $\ell = 0$. Let us thus assume that $\ell \geq 1$.
We take a cube $Q$ with the same center as $B$, sidelength of $2r$, and all its sides either orthogonal or parallel to $v$. Note that $B \subseteq Q \subseteq \sqrt{N}B$, which gives
    \begin{equation}\label{eq:0807-10}
        \left( \dashint_{B} a^{-\frac{1}{\betapow}} \d z \right)^{\betapow} \leq N^{\frac{1}{2}\betapow}\left( \dashint_{Q} a^{-\frac{1}{\betapow}} \d z\right)^{\betapow}\,.
    \end{equation}
    Let $W$ be a set of orthogonal vectors to $v$, with all coordinates lesser than $r$. For any vector $w \in W$, let us consider an interval $V_w = [x + w - rv, x + w + rv]$. Note that
    \begin{equation}\label{eq:0808-6}
        \dashint_{Q} a^{-\frac{1}{\betapow}} \d z = \dashint_{W} \left(\dashint_{V_w} a^{-\frac{1}{\betapow}} \d s\right)\,\d w\,.
    \end{equation}
    Let us fix $w \in W$ and consider a function
    \begin{equation*}
        a_w(t) \coloneqq a(w+x+tv) \text{ for } |t| \leq r\,.
    \end{equation*}
    Observe that $a_w^{(\ell)}(t) = D^\ell a(w+x+tv)[v^{\ell}]$, and thus, by an assumption, we have
    \begin{equation*}
        \ve \Mbound\leq a_w^{(\ell)}(t) \leq \Mbound\,.
    \end{equation*}
    In particular, $a_w^{(\ell)}$ is strictly positive on interval $[-r, r]$. By Lemma~\ref{lem:minima}, $a_w$ restricted to $[-r, r]$ has at most $\ell+1$ local minima, possibly including the endpoints of $[-r, r]$. Let us denote those minima by $t_1, t_2, \dots, t_m$, $m \leq l + 1$, and let us also take numbers $\ai_1, \dots, \ai_{m+1}$ such that
    \begin{equation*}
        -r = \ai_1 \leq t_1 \leq \ai_2 \leq t_2 \leq \dots \leq t_m \leq \ai_{m+1} = r
    \end{equation*}
    and $a_w$ is decreasing on $[\ai_i, t_i]$ and increasing on $[t_i, \ai_{i+1}]$, for every $i = 1, \dots, m$. We fix such an $i$. Taking any $h$ such that $h \leq \ai_{i+1} - t_i$, using Taylor expansion and~\eqref{eq:Kbounds} we observe that
    \begin{equation}\label{eq:0804-1}
        \sum_{j=0}^{\ell-1} \frac{a_w^{(j)}(t_i)h^j}{j!} + \frac{\ve\Mbound h^{\ell}}{\ell !} \leq a_w(t_i + h) \leq \sum_{j=0}^{\ell-1} \frac{a_w^{(j)}(t_i)h^j}{j!} + \frac{\Mbound h^{\ell}}{\ell !}\,.
    \end{equation}
    As $a_w(t_i) \leq a_w(t_i + h)$, the second inequality of~\eqref{eq:0804-1} implies
    \begin{equation}\label{eq:2407}
        0 \leq \sum_{j=1}^{\ell-1} \frac{a_w(t_i)h^j}{j!} + \frac{\Mbound h^{\ell}}{\ell !}\,.
    \end{equation}
    Lemma~\ref{lem:poly} may be thus applied to the right-hand side of~\eqref{eq:2407} on interval $[0, s_{i+1} - t_i]$. In turn, we get that there exists a finite family of pairwise disjoint intervals $[\tilde \aii_i, \tilde \bii_i]_{i=1}^{m_2}$ such that $\bii_i \leq c_{\ell , \ve}\aii_i$ and
    \begin{equation}\label{eq:0807-3}
        h \not\in \bigcup_{i=1}^{m_2} [\tilde \aii_i, \tilde \bii_i] \quad \implies \quad  a_w(t_i + h) \geq \frac{\ve \Mbound h^{\ell}}{2^{k}\ell!}\,,
    \end{equation}
    where we also used the first inequality of~\eqref{eq:0804-1}. By monotonicity of $a_w$, if $a_w(t_i + h_0) \geq Ch_0^\ell$, then for $h \in [h_0, \min \left(c_{\ell , \ve}h_0 , \ai_{i+1} - t_i\right)]$, it holds 
    
    \begin{equation*}
    a_w(t_i + h) \geq a_w(t_i + h_0) \geq Ch_0^\ell \geq \frac{C}{c_{\ell , \ve}^{\ell}}h^{\ell}\,. 
    \end{equation*}
    
    This together with~\eqref{eq:0807-3} means that for some $c = c(\beta, \ve) > 0$, it holds
    \begin{equation*}
        a_w(t_i + h) \geq \frac{\Mbound h^{\ell}}{c} \text{ for } h \leq \ai_{i+1} - t_i\,.
    \end{equation*}
    In particular,
    \begin{equation}\label{eq:0807-4}
        \int_{t_i}^{\ai_{i+1}} a_w^{-\frac{1}{\betapow}} \d s \leq cK^{-\frac{1}{\betapow}}\int_{t_i}^{\ai_{i+1}} h^{-\frac{l}{\beta}}\d h \leq  c\Mbound^{-\frac{1}{\betapow}}(\ai_{i+1}-t_i)^{1 - \frac{\ell}{\betapow}} \leq c\Mbound^{-\frac{1}{\betapow}}r^{1 - \frac{\ell}{\betapow}}\,.
    \end{equation}
    With a similar reasoning, one can also establish that
    \begin{equation}\label{eq:0807-5}
        \int_{\ai_i}^{t_i} a_w^{-\frac{1}{\betapow}} \d s  \leq c\Mbound^{-\frac{1}{\betapow}}r^{1 - \frac{\ell}{\betapow}}\,.
    \end{equation}
    Summing over all $i$ inequalities~\eqref{eq:0807-4} and~\eqref{eq:0807-5}, we get
    \begin{equation*}
        \dashint_{V_w} a_w^{-\frac{1}{\betapow}} \d s \leq 2mc\Mbound^{-\frac{1}{\betapow}}r^{-\frac{\ell}{\betapow}} \leq 2(\ell+1)c\Mbound^{-\frac{1}{\betapow}}r^{-\frac{\ell}{\betapow}}\,,
    \end{equation*}
    which, by~\eqref{eq:0808-6}, gives
    \begin{equation*}
        \left(\dashint_{Q} a^{-\frac{1}{\betapow}}\d z\right)^{\betapow} \leq c\Mbound^{-1}r^{-\ell}\,,
    \end{equation*}
    Applying~\eqref{eq:0807-10}, we get the desired result.    
\end{proof}
We are now in a position to prove the most crucial proposition for the proof of Theorem~\ref{theo:decomp}.
\begin{proposition}\label{prop:Muck}
    Let $a \in C^{k, \alpha}(\Uset)$ be non-negative function, where $\Uset$ is open and bounded subset of $\rn$, $k \in \N$, $\alpha \in (0, 1]$. We define function $\wfun : \Uset \to [0, 1]$ as
\begin{equation}\label{eq:defw}
    \wfun(x) = \begin{cases}
        \frac{a(x)}{\sum_{i=0}^{k} \|D^ia(x)\|^{\frac{k+\alpha}{k+\alpha - i}}}\, &\text{if }\|D^ia(x)\| \neq 0 \text{ for some $i$;}\\
        1\, &\text{otherwise.}
    \end{cases}
\end{equation}
    Then, for any open $\Omega$ such that $\Omega \Subset \Uset$, it holds $\wfun \in A_{k+\alpha+1}(\Omega)$.
\end{proposition}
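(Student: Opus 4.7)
The task is to verify the Muckenhoupt condition
\begin{equation*}
    \sup_{B \subseteq \Omega}\left(\dashint_B \wfun\,\d z\right)\left(\dashint_B \wfun^{-1/(k+\alpha)}\,\d z\right)^{k+\alpha} < \infty.
\end{equation*}
Because the $i=0$ summand in the definition of $\wfun$ is exactly $a$, we have $\wfun \leq 1$ pointwise, so the first factor is trivially at most $1$; the essential task is therefore to bound the second factor, possibly with help from the first on certain balls. My plan is to follow the small/medium/large-ball trichotomy announced at the start of this section.

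To each $x_0 \in \Omega$ I attach an intrinsic scale. Let $\ell = \ell(x_0) \in \{0, \dots, k\}$ be an index maximizing $\|D^i a(x_0)\|^{(k+\alpha)/(k+\alpha-i)}$, set $M = \|D^\ell a(x_0)\|$, and, when $M > 0$, put $R = M^{1/(k+\alpha-\ell)}$. Maximality of $\ell$ gives $\|D^{\ell+j} a(x_0)\| \leq M^{(k+\alpha-\ell-j)/(k+\alpha-\ell)}$ for $j \geq 1$, and a Taylor expansion then yields
\begin{equation*}
    \|D^\ell a(y) - D^\ell a(x_0)\| \leq c(k,\alpha)\,M\,(\rho/R)^{\min(1,\,k+\alpha-\ell)} \quad \text{for } |y - x_0| \leq \rho.
\end{equation*}
Choosing $\delta_0 = \delta_0(k,\alpha,N)$ small enough, this bound (together with the analogous estimate underlying Lemma~\ref{lem:Zpart}) ensures that on $\sqrt{N}\,B(x_0, \delta_0 R)$ we have $\bfun \sim \bfun(x_0) \sim M^{(k+\alpha)/(k+\alpha-\ell)}$, and that a unit vector $v$ may be chosen with $D^\ell a(z)[v^\ell] \geq c M$ uniformly on the same ball. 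Fixing the sign of $v$ when $\ell$ is even relies on a Glaeser-type observation exploiting $a \geq 0$.

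For a \emph{small} ball, $r \leq \delta_0 R/\sqrt{N}$, Lemma~\ref{lem:Muck} applied to $a$ with $\beta = k+\alpha$ and the vector $v$ yields $(\dashint_B a^{-1/(k+\alpha)})^{k+\alpha} \leq c\,M^{-1}r^{-\ell}$. Combined with $\bfun \sim M^{(k+\alpha)/(k+\alpha-\ell)}$ on $B$, this becomes $(\dashint_B \wfun^{-1/(k+\alpha)})^{k+\alpha} \leq c(R/r)^\ell$, which need not be bounded by a constant; a complementary Taylor bound for $a$ (again exploiting the maximality of $\ell$) shows $\dashint_B \wfun \leq c(r/R)^\ell$, and the two estimates multiply to the Muckenhoupt bound on $B$. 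For a \emph{medium} ball, $\delta_0 R/\sqrt{N} < r \leq R_1$ with $R_1 = R_1(a, \Omega)$ a threshold depending on the $C^{k,\alpha}$-norm of $a$, one cannot use the $\dashint_B \wfun$ factor sharply; instead one must show the uniform bound $(\dashint_B \wfun^{-1/(k+\alpha)})^{k+\alpha} \leq C$ directly. This is the step where Lemmas~\ref{lem:Muck} and~\ref{lem:poly} combine most tightly: Lemma~\ref{lem:poly} is applied to the polynomial $\sum_i \|D^i a(x_0)\|\,t^i + t^{k+\alpha}$ on $[0, r]$ to exclude a finite family of dyadic scales of bounded aspect ratio on the complement of which $a$ admits a good polynomial proxy on $B$; the contribution of each removed scale to $\dashint_B \wfun^{-1/(k+\alpha)}$ is absorbed thanks to the bounded aspect ratio.

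For a \emph{large} ball, $r > R_1$, cover $B$ by a bounded-overlap family $\{B_j\}$ of medium-sized balls with $\sum_j |B_j| \leq c_N |B|$; then
\begin{equation*}
    \dashint_B \wfun^{-1/(k+\alpha)} \leq \sum_j \frac{|B_j|}{|B|}\,\dashint_{B_j} \wfun^{-1/(k+\alpha)} \leq c
\end{equation*}
by the medium-ball estimate, and raising to the power $k+\alpha$ and multiplying by $\dashint_B \wfun \leq 1$ closes the case. The main obstacle, in my view, is the medium-ball step: it requires orchestrating Taylor expansion, the Glaeser-type sign selection, Lemma~\ref{lem:poly}, and Lemma~\ref{lem:Muck}, while carefully tracking scales so that the removed intervals do not conspire against the estimate. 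Once that step is in place, the small- and large-ball regimes follow by direct computation and a routine covering argument, respectively.
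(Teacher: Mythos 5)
Your three-regime plan mirrors the paper's (small balls via Lemma~\ref{lem:Muck} after a Glaeser-type sign argument, a uniform bound on medium balls, and a covering of large balls), and the intrinsic scale $R = M^{1/(k+\alpha-\ell)}$ is, up to constants, the paper's $R_x = \tildec\,\bfun(x)^{1/(k+\alpha)}$. But the small-ball step as written is wrong, and the flaw is in the choice of $\ell$. You fix $\ell$ as the index maximizing $\|D^ia(x_0)\|^{(k+\alpha)/(k+\alpha-i)}$ \emph{at the center only}, independently of $r$. This does not yield $\dashint_B a \leq c M r^\ell$. Concretely, take $N=1$, $k=2$, $\alpha=1$, and $a(t)=\ve+t^2$ with $\ve$ small. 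At $t=0$ the argmax is $\ell=2$, $M=2$, $R=2$. For a ball $B(0,r)$ with $r\ll\sqrt{\ve}$ one has $\dashint_B a\approx\ve$, while $Mr^\ell=2r^2\ll\ve$; the ratio blows up as $r\to0$, so there is no $r$-independent constant $c$ making the claimed bound hold. The Muckenhoupt inequality is of course still true here (because $a$ is essentially constant on $B$, so the \emph{right} dominant derivative on that scale is the zeroth), but your fixed argmax cannot see this. The paper's selection is scale-adapted: it takes the \emph{smallest} $\ell$ satisfying $\|D^\ell a(x)\|\gtrsim r\sup_{cB}\|D^{\ell+1}a\|$ (see~\eqref{eq:def-l}), and the recursive inequalities~\eqref{eq:0807-1}--\eqref{eq:2107-1} that this produces are exactly what give $\sup_B a\lesssim r^\ell\Mbound$. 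Without a scale-adapted choice, the two estimates you multiply do not both hold.

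The medium-ball step is also internally inconsistent. You propose applying Lemma~\ref{lem:poly} to $\sum_i\|D^ia(x_0)\|t^i+t^{k+\alpha}$, but that polynomial has only nonnegative coefficients, so Lemma~\ref{lem:poly} is vacuous for it (the conclusion holds with an empty family of excised intervals); no useful exclusion of scales is produced, and the claim that the removed scales can be absorbed is unsupported. Lemma~\ref{lem:poly} is invoked \emph{inside} Lemma~\ref{lem:Muck}, on the Taylor polynomial of a one-dimensional slice $a_w$ at a local minimum, whose coefficients can be of either sign; it is the non-negativity of $a$ (hence of that Taylor polynomial) that makes the lemma bite, and the monotonicity argument (via Lemma~\ref{lem:minima}) fills in the excised intervals. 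What the medium step really needs is a \emph{second}, different scale-adapted selection of $\ell$: the paper's Step 2 takes the smallest $\ell$ with $\|D^\ell a(x)\|\gtrsim\bfun(x)^{(k+\alpha-\ell)/(k+\alpha)}$ (see~\eqref{eq:0907-1}), verifies it exists by a summation/contradiction argument, proves $D^\ell a$ is comparable to $\|D^\ell a(x)\|$ across a $\sqrt N$-enlargement of $B(x,R_x)$, fixes the sign by the Glaeser argument, and only then calls Lemma~\ref{lem:Muck} with a nontrivial $\ve$ to get $\dashint_{B(x,R_x)}\wfun^{-1/(k+\alpha)}\lesssim 1$. Finally, your large-ball covering by medium balls ignores the set where all $D^ia$, $i\le k$, vanish; there the intrinsic scale is zero and your medium-ball bound is simply unavailable. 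The paper circumvents this by restricting the Vitali covering to $G = B\cap\{\|D^ia\|\neq 0\ \text{for some }i\}$, using $\wfun\equiv 1$ on $B\setminus G$ by the definition~\eqref{eq:defw}, and then controlling $\sum_n|B_n|$ via the pairwise-disjoint $\tfrac15$-balls and the geometric observation~\eqref{eq:2507} that each $\tfrac15 B_n$ has a fixed fraction of its measure inside $B$.
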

\begin{proof}
    We define function $\bfun$ as in~\eqref{eq:defb}, so that $a = \bfun \wfun$. Let $\Omprim$ be any open set such that $\Omega \Subset \Omprim \Subset \Uset$. By Lemma~\ref{lem:Zpart}, we have that $\bfun \in \SP^{k+\alpha}(\Omprim)$, i.e., there exists a constant $\Cb \geq 1$ such that for all $x, y \in \Omprim$ it holds
    \begin{equation}\label{eq:defCb}
        \bfun(x) \leq \Cb  \left( \bfun(y) + |x-y|^{k+\alpha} \right)\,.
    \end{equation}
    Without loss of generality, we may assume that $[D^ka]_{C^{0, \alpha}(\Uset)} \leq \Cb $.
    Let us now define a sequence $\{\Cnum_n\}_n$ recursively via
    \begin{equation}\label{eq:defC}
    \Cnum_0 = \sqrt{N}\,, \quad \Cnum_{n+1} = 4\cdot 3^{n+1}  ((n+1)!) \cdot \left(\prod_{i=0}^{n} \Cnum_i \right) + 1\,. 
\end{equation}
    We also define numbers $\{\dnum_n\}_n$ by the formula
    \begin{equation*}
        \dnum_0 = \frac{1}{2}\,, \quad \dnum_{n+1} = \frac{1}{2}\min_{i \leq n} \frac{\alpha \dnum_i}{n+1+\alpha - i}\,.
    \end{equation*}
    Let us now define constant $\tildec = \tildec(\Omega, \Omprim, \|\bfun\|_{L^\infty(\Omega)}, k, \alpha, \Cb)$ by the following formula
    \begin{align*}
        \tildec \coloneqq \min \Biggl\{&\frac{\dist(\Omega, \partial \Omega')}{\Cnum_k \left(\|\bfun\|_{L^\infty(\Omega)}^{\frac{1}{k+\alpha}} + 1\right)}, \left(\frac{1}{2\Cb }\right)^{\frac{1}{k+\alpha}}, \frac{1}{2\Cnum_k(k+1)(2 \cdot 3^k\Cb )^{\frac{1}{\alpha}}}, \left(\frac{1}{4\Cb (k+1) (4\Cb k!)^{\frac{k+\alpha}{\alpha}}}\right)^{\frac{1}{\alpha\dnum_k}},\\
        &\left(\frac{\dist(\Omega, \partial \Omprim)}{(\Cnum_k + 2k!)\left(\|\bfun\|_{L^\infty(\Omega)}^{\frac{1}{k+\alpha}}+1\right)}\right)^{\frac{1}{\dnum_k}}, 2^{-\frac{1}{(1-\alpha)\dnum_k}}, \left(\frac{1}{\sqrt{N}}\right)^{\frac{1}{1-d_k}}\Biggr\}\,.
    \end{align*}
    For any $x \in \Omega$, we define
    \begin{equation}\label{eq:defR}
        R_x \coloneqq \tildec \bfun(x)^{\frac{1}{k+\alpha}}\,.
    \end{equation}
    From the choice of $\tildec$, we have that $B(x, \Cnum_kR_x) \subseteq \Omprim$ for any $x \in \Omega$, where $\Cnum_k$ is defined in~\eqref{eq:defC}, while $k$ comes from the statement. \newline
    
    {\bf Step 1: Muckenhoupt condition on small balls.} Let us take any ball $B = B(x, r)$, where $x \in \Omega$ and assume that there exists $x_0 \in \Omega$ such that $B \subseteq B(x_0, R_{x_0})$. In particular, $\bfun(x_0) > 0$ and $r \leq R_{x_0}$. Note that for any $y \in B\left(x_0, \left(\bfun(x_0)/(2\Cb) \right)^{\frac{1}{k+\alpha}}\right) \cap \Omprim$, we have, due to~\eqref{eq:defb}, that
    \begin{equation}\label{eq:0707-1}
        \frac{\bfun(x_0)}{2\Cb } \leq \bfun(y) \leq 2\Cb \bfun(x_0)\,.
    \end{equation}
    In particular,~\eqref{eq:0707-1} holds for every $y \in B$, due to $B \subseteq B(x_0, R_{x_0})$ and the choice of $\tildec$. Therefore,
    \begin{equation}\label{eq:0707-2}
        \left(\dashint_{B} \wfun \d z\right) \left( \dashint_{B} \wfun^{-\frac{1}{k+\alpha}}\d z \right)^{k+\alpha} = \left(\dashint_{B} \frac{a}{\bfun} \d z\right) \left( \dashint_{B} \left(\frac{\bfun}{a} \right)^{\frac{1}{k+\alpha}} \d z \right)^{k+\alpha} \leq 4 \Cb^2 \left( \dashint_{B} a \d z \right) \left( \dashint_{B} a^{-\frac{1}{k+\alpha}} \d z \right)^{k+\alpha}\,.
    \end{equation}
    Thus, it is sufficient to establish the Muckenhoupt condition on the ball $B$ only for the function $a$. Note that~\eqref{eq:0707-1} also implies that $\bfun(x) > 0$.

    We now take the smallest $\ell \in \{0, 1, \dots, k - 1\}$ such that
    \begin{equation}\label{eq:def-l}
        \|D^\ell  a(x)\| \geq 2\Cnum_{\ell}r \sup_{z \in \Cnum_{\ell+1}B} \|D^{\ell+1}a(z)\|\,,
    \end{equation}
    or, if there is no such number, we take $\ell = k$. Note that by the choice of $\tildec$, we have that $\Cnum_i B \subseteq B(x_0, \Cnum_i R_{x_0})\subseteq \Omprim$ for every $i \in \{0, 1, \dots, k\}$. We denote
    \begin{equation*}
        \Mbound\coloneqq \sup_{z \in \Cnum_{\ell}B} \|D^\ell  a(z)\|\,.
    \end{equation*}
    Note that $\Mbound\neq 0$. Indeed, if $\Mbound= 0$, then $\ell$-th derivative of $a$ vanishes on $B$, and thus also derivatives of higher order vanish on $B$. On the other hand, by the definition of $\ell$, we have that $D^i a(x) = 0$ for every $i \in \{0, 1, \dots, \ell-1\}$. Thus, we have that $\bfun(x) = 0$, but this is a contradiction. Therefore, $\Mbound> 0$.

    The definition of $\ell$ immediately gives that $\|D^{\ell-1}a(x)\| \leq 2\Cnum_{\ell-1}r \Mbound$, and further
    \begin{equation*}
        \sup_{z \in \Cnum_{\ell-1}B} \|D^{\ell-1}a(z)\| \leq \sup_{z \in \Cnum_{\ell-1}B} \|D^{\ell-1}a(z) - D^{\ell-1}a(x)\| + \|D^{\ell-1}a(x)\| \leq \Cnum_{\ell-1}r\sup_{z \in \Cnum_{\ell-1}B}\|D^\ell a(z)\| + 2\Cnum_{\ell-1}r\Mbound\leq 3\Cnum_{\ell-1}r\Mbound\,.
    \end{equation*}
    By an easy induction, we have
    \begin{align}
        \|D^{\ell-i}a(x)\| &\leq 2 \cdot 3^{i-1} \left( \prod_{j=\ell-i}^{\ell-1} \Cnum_j \right)r^i\Mbound\quad \text{ and } \label{eq:0807-1}\\
        \sup_{z \in C_{\ell-i}B}\|D^{\ell-i}a(z)\| &\leq 3^{i} \left( \prod_{j=\ell-i}^{\ell-1} \Cnum_j \right)r^i\Mbound \quad \text{ for $i = 0, \dots, \ell$.} \label{eq:2107-1}
    \end{align}
    Let us remark that we interpret an empty product as equal to $1$.
    
    We shall now prove that for any $y \in \Cnum_{\ell} B$, it holds
    \begin{equation}\label{eq:0707-3}
        \|D^\ell a(x) - D^\ell a(y)\| \leq \frac{1}{2}\|D^\ell a(x)\| \quad \text{ and } \quad \|D^\ell a(x)\| \geq \frac{1}{2}\Mbound\,.
    \end{equation}
    If $\ell < k$, then we simply use~\eqref{eq:def-l} to conclude that
    \begin{equation}\label{eq:0707-4}
        \|D^\ell a(x) - D^\ell a(y)\| \leq \Cnum_{\ell}r \sup_{z \in \Cnum_{\ell} B} \|D^{\ell+1}a(z)\| \leq \frac{1}{2}\|D^\ell a(x)\|\,.
    \end{equation}
    From~\eqref{eq:0707-4} and the definition of $\Mbound$, it also follows that
    \begin{equation*}
        \Mbound\leq \sup_{z \in \Cnum_{\ell} B} \|D^\ell a(z) - D^\ell a(x)\| + \|D^\ell a(x)\| \leq \frac{3}{2}\|D^\ell a(x)\|\,.
    \end{equation*}
    Inequalities~\eqref{eq:0707-3} thus follow if $\ell < k$. Let us now assume that $\ell = k$. Then,~\eqref{eq:0707-1} would follow immediately from $D^ka \in C^{0, \alpha}(\Uset)$, if we establish that $\|D^ka(x)\| \geq 2\Cb (\Cnum_kr)^{\alpha}$. Suppose that $\|D^ka(x)\| < 2\Cb (\Cnum_kr)^{\alpha}$. Then, using that $D^{k}a \in C^{0, \alpha}(\Uset)$, we get $\Mbound\leq 3\Cb (\Cnum_kr)^{\alpha}$, and from~\eqref{eq:0807-1}, we have for every $i \in \{0, 1, \dots, k\}$ that
    \begin{equation*}
        \|D^{k-i}a(x)\| < 2 \cdot 3^i \left( \prod_{j=k-i}^{k-1} \Cnum_j \right)r^{i+\alpha}\Cb \Cnum_k^{\alpha} \leq 2\cdot 3^i\Cb (\Cnum_kr)^{i+\alpha}
    \end{equation*}
    As $B \subseteq B(x_0, R_{x_0})$, and we have~\eqref{eq:0707-1}, we have
    \begin{equation*}
        r \leq R_{x_0} = \tildec \bfun(x_0)^{\frac{1}{k+\alpha}} \leq \tildec \left(2\Cb \bfun(x)\right)^{\frac{1}{k+\alpha}}\,.  
    \end{equation*}
    Thus, using the choice of $\tildec$, we get
    \begin{equation*}
        \bfun(x) = \sum_{i=0}^{k} \|D^{k-i}a(x)\|^{\frac{k+\alpha}{\alpha + i}} < \sum_{i=0}^{k} \left( 2 \cdot 3^i\Cb  \right)^{\frac{k+\alpha}{\alpha+i}}(\Cnum_kr)^{k+\alpha} \leq (k+1)\left(2 \cdot 3^k\Cb  \right)^{\frac{k+\alpha}{\alpha}} \Cnum_k^{k+\alpha} \tildec^{k+\alpha}2\Cb \bfun(x) \leq \frac{\bfun(x)}{2}\,,
    \end{equation*}
    which is clearly a contradiction. Therefore, $\|D^ka(x)\| \geq 2\Cb (\Cnum_kr)^{\alpha}$, and~\eqref{eq:0707-3} follows.

    Take a vector $v \in \rn$ such that $|v| = 1$ and $|D^\ell a(x)[v^{\ell}]| = \|D^\ell a(x)\|$. We shall prove that $D^\ell a(x)[v^{\ell}] = \|D^\ell a(x)\|$. Suppose that $D^\ell a(x)[v^{\ell}] = -\|D^\ell a(x)\|$. Then, there exists $\xi \in [x, x + \Cnum_{\ell}rv]$ such that
    \begin{align}\label{eq:0807-2}
        a(x + \Cnum_{\ell}rv) &= \sum_{i=1}^{\ell} \frac{(\Cnum_{\ell}r)^{\ell-i}(D^{\ell-i}a(x)[v^{\ell-i}])}{(\ell-i)!} + \frac{(\Cnum_{\ell}r)^{\ell}(D^\ell a(\xi)[v^{\ell}])}{\ell !} \nonumber\\
        &= \sum_{i=1}^{\ell} \frac{(\Cnum_{\ell}r)^{\ell-i}(D^{\ell-i}a(x)[v^{\ell-i}])}{(\ell-i)!} - \frac{(\Cnum_{\ell}r)^{\ell}\|D^\ell a(x)\|}{\ell !} + \frac{(\Cnum_{\ell}r)^{\ell}((D^\ell a(\xi)-D^\ell a(x))[v^{\ell}])}{\ell !}\,.
    \end{align}
    Using that $a$ is non-negative,~\eqref{eq:0707-3},~\eqref{eq:0807-1} and~\eqref{eq:0807-2}, we get
    \begin{align*}
        0 \leq a(x + \Cnum_{\ell}rv) &\leq \ell \Cnum_{\ell}^{\ell-1} 3^{\ell}r^{\ell}\Mbound\left(\prod_{j=0}^{\ell-1} \Cnum_j\right) - \frac{(\Cnum_{\ell}r)^{\ell}\|D^\ell a(x)\|}{\ell !} + \frac{(\Cnum_{\ell}r)^{\ell}\|D^\ell a(x)\|}{2\ell !}\\
        &\leq \ell \Cnum_{\ell}^{\ell-1} 3^{\ell}r^{\ell}M\left(\prod_{j=0}^{\ell-1} \Cnum_j\right) - \frac{(\Cnum_{\ell}r)^{\ell}\Mbound}{4\ell !} = \Mbound \Cnum_{\ell}^{\ell-1}r^{\ell} \left(\ell 3^{\ell}\left(\prod_{j=0}^{\ell-1} \Cnum_j\right) - \frac{\Cnum_{\ell}}{4\ell !} \right) < 0\,,
    \end{align*}
    where the last inequality is a consequence of the choice of $\Cnum_{\ell}$. We have a clear contradiction, therefore, $|D^\ell a(x)[v^{\ell}]| = \|D^\ell a(x)\|$.

    By~\eqref{eq:0707-3} and the fact that $\Cnum_0 \geq \sqrt{N}$, for all $z \in \sqrt{N}B$ it holds
    \begin{equation*}
        \frac{1}{4}\Mbound\leq D^\ell a(z)[v^{\ell}] \leq \Mbound\,.
    \end{equation*}
    
    Applying Lemma~\ref{lem:Muck} and~\eqref{eq:2107-1} with $i=\ell $, we finally obtain
    \begin{equation}\label{eq:step1}
        \left( \dashint_B a \d z \right)\left( \dashint_{B} a^{-\frac{1}{k+\alpha}} \d z \right)^{k+\alpha} \leq 3^{\ell} \left( \prod_{j=0}^{\ell-1} \Cnum_j \right)r^\ell \Mbound \cdot c(k, N, \alpha)K^{-1}r^{-\ell} = c(k, N, \alpha)\,.
    \end{equation}
    {\bf Step 2: More control over medium-sized balls.} Let $B = B(x, R_x)$ for some $x \in \Omega$, where $R_x$ is defined as in~\eqref{eq:defR}. Let us take the smallest $\ell \in \{0, 1, \dots, k\}$ such that
    \begin{equation}\label{eq:0907-1}
        \|D^\ell a(x)\| \geq 4\Cb \ell !\tildec^{\alpha \dnum_{\ell}}\bfun(x)^{\frac{k+\alpha-\ell }{k+\alpha}}\,.
    \end{equation}
    Note that such $\ell$ has to exist. Indeed, suppose that~\eqref{eq:0907-1} fails for all $\ell$. Then
    \begin{equation*}
        \bfun(x) = \sum_{i=0}^{k} \|D^ia(x)\|^{\frac{k+\alpha}{k+\alpha-i}} < \sum_{i=0}^{k} \left(4\Cb i!\tildec^{\alpha \dnum_i}\right)^{\frac{k+\alpha}{k+\alpha-i}} \bfun(x) \leq (k+1)\left(4k! \Cb \right)^{\frac{k+\alpha}{\alpha}}\tildec^{\alpha \dnum_k} \bfun(x) \leq \frac{\bfun(x)}{2}\,,
    \end{equation*}
    where the last inequality comes from the choice of $\tildec$. We obtained a clear contradiction, so the chosen $\ell$ exists.
    
    Observe that, due to the choice of $\tildec$, we have $B(x, 2\ell !\tildec^{\dnum_{\ell}}\bfun(x)^{\frac{1}{k+\alpha}}) \subseteq \Omprim$. We shall now prove that for every $y \in B(x, 2\ell !\tildec^{\dnum_{\ell}}\bfun(x)^{\frac{1}{k+\alpha}})$, it holds
    \begin{equation}\label{eq:0907-2}
        \|D^\ell a(x) - D^\ell a(y)\| \leq \frac{1}{2}\|D^\ell a(x)\|\,.
    \end{equation}
    If $\ell = k$, we use the fact that $D^ka \in C^{0, \alpha}(\Uset)$ and~\eqref{eq:0907-1} to estimate
    \begin{equation*}
        \|D^ka(x) - D^ka(y)\| \leq \Cb \left(2k! \tildec^{\dnum_k}\bfun(x)^{\frac{1}{k+\alpha}} \right)^{\alpha} \leq \frac{1}{2}(k!)^{\alpha - 1}\|D^ka(x)\| \leq \frac{1}{2}\|D^ka(x)\|
    \end{equation*}
    for all $y \in B(x, 2\ell !\tildec^{\dnum_{\ell}}\bfun(x)^{\frac{1}{k+\alpha}})$. Therefore, we get~\eqref{eq:0907-2} for $\ell = k$. If $\ell < k$, let us observe that, due to the fact that $\bfun \in \SP^{k+\alpha}(\Omprim)$, we have
    \begin{equation}\label{eq:0907-4}
        \sup_{y \in B(x, 2\ell !\tildec^{\dnum_{\ell}}\bfun(x)^{\frac{1}{k+\alpha}})}\bfun(y) \leq \Cb \left( \bfun(x) + (2\ell !)^{k+\alpha}\tildec^{(k+\alpha)\dnum_{\ell}}\bfun(x) \right) \leq \Cb \bfun(x)\left(1 + (2k!)^{k+\alpha}\tildec^{(k+\alpha)\dnum_k} \right) \leq 2\Cb \bfun(x)\,,
    \end{equation}
    where in the last inequality we used the choice of $\tildec$. Using~\eqref{eq:0907-4} in conjunction with the definition of $\bfun$, we get
    \begin{equation}\label{eq:0907-6}
        \sup_{y \in B(x, 2\ell !\tildec^{\dnum_{\ell}}\bfun(x)^{\frac{1}{k+\alpha}})} \|D^{i}a(y)\| \leq \left(2\Cb \bfun(x) \right)^{\frac{k+\alpha - i}{k+\alpha}} \leq 2\Cb \bfun(x)^{\frac{k+\alpha - i}{k+\alpha}}\,,
    \end{equation}
    which applied for $i = \ell+1$, together with~\eqref{eq:0907-1} gives, for all $y \in B(x, 2\ell !\tildec^{\dnum_{\ell}}\bfun(x)^{\frac{1}{k+\alpha}})$, that
    \begin{equation*}
        \|D^\ell a(x) - D^\ell a(y)\| \leq 2\ell !\tildec^{\dnum_{\ell}}\bfun(x)^{\frac{1}{k+\alpha}}2\Cb \bfun(x)^{\frac{k+\alpha - \ell - 1}{k+\alpha}} \leq \tildec^{(1-\alpha)\dnum_{\ell}}\|D^\ell a(x)\| \leq \frac{1}{2}\|D^\ell a(x)\|\,,
    \end{equation*}
    where in the last inequality we used the choice of $\tildec$. We thus established~\eqref{eq:0907-2}.

    Let us now take a vector $v \in \rn$ such that $|v| = 1$ and $|D^\ell a(x)[v^{\ell}]| = \|D^\ell a(x)\|$. We shall prove that $D^\ell a(x)[v^{\ell}] = \|D^\ell a(x)\|$. Suppose that $D^\ell a(x)[v^{\ell}] = -\|D^\ell a(x)\|$. Then, there exists $\xi \in [x, x + 2\ell !\tildec^{\dnum_{\ell}}\bfun(x)^{\frac{1}{k+\alpha}}v]$ such that
    \begin{align}\label{eq:0907-5}
        &a(x + 2\ell !\tildec^{\dnum_{\ell}}\bfun(x)^{\frac{1}{k+\alpha}}v) = \sum_{i=0}^{\ell-1} \frac{(2\ell !)^i\tildec^{i\dnum_{\ell}}\bfun(x)^{\frac{i}{k+\alpha}}(D^ia(x)[v^i])}{i!} + \frac{(2\ell !)^{\ell}\tildec^{\dnum_{\ell}}\bfun(x)^{\frac{\ell}{k+\alpha}}(D^\ell a(\xi)[v^{\ell}])}{2\ell !} \nonumber\\
        &= \sum_{i=0}^{\ell-1} \frac{(2\ell !)^i\tildec^{i\dnum_{\ell}}\bfun(x)^{\frac{i}{k+\alpha}}(D^ia(x)[v^i])}{i!} + \frac{(2\ell !)^{\ell}\tildec^{\dnum_{\ell}}\bfun(x)^{\frac{\ell}{k+\alpha}}(D^\ell a(x)[v^{\ell}])}{\ell !} + \frac{(2\ell !)^{\ell}\tildec^{\dnum_{\ell}}\bfun(x)^{\frac{\ell}{k+\alpha}}((D^\ell a(\xi)-D^\ell a(x))[v^{\ell}])}{\ell !}\,.
    \end{align}
    Equality~\eqref{eq:0907-5} together with the definition of $\ell$ and~\eqref{eq:0907-2} gives
    \begin{align*}
        a(x + 2\ell !\tildec^{\dnum_{\ell}}\bfun(x)^{\frac{1}{k+\alpha}}v) &\leq \sum_{i=0}^{\ell-1} \frac{(2\ell !)^{i}\tildec^{i\dnum_\ell+\alpha \dnum_i}\bfun(x)4\Cb i!}{i!} - \frac{(2\ell !)^{\ell} \tildec^{d_l}\bfun(x)^{\frac{\ell}{k+\alpha}}\|D^\ell a(x)\|}{2\ell !}\\
        &\leq \sum_{i=0}^{\ell-1} (2\ell !)^{\ell-1}\tildec^{i\dnum_\ell+\alpha \dnum_i}\bfun(x)4\Cb  - \frac{(2\ell !)^{\ell+1}\tildec^{(\ell+\alpha)\dnum_{\ell}}\bfun(x)4\Cb }{2\ell !}\\ 
        &= 4\Cb \tildec^{(\ell+\alpha)\dnum_{\ell}}(2\ell !)^{\ell-1}\bfun(x)\left(\sum_{i=0}^{\ell-1} \tildec^{i\dnum_\ell+\alpha \dnum_i - (\ell+\alpha)\dnum_{\ell}} - \ell ! \right)\,.
    \end{align*}
    By the definition of $\dnum_{\ell}$, we have $i\dnum_\ell+\alpha \dnum_i - (\ell+\alpha)\dnum_{\ell} > 0$ for all $i \in \{0, 1, \dots, \ell\}$. Therefore, from the last display and the fact that $a$ is non-negative, we infer
    \begin{equation*}
        0 \leq a(x + 2\ell !\tildec^{\dnum_{\ell}}\bfun(x)^{\frac{1}{k+\alpha}}v) < 4\Cb \tildec^{(\ell+\alpha)\dnum_{\ell}}(2\ell !)^{\ell-1}\bfun(x)\left(l - \ell ! \right) \leq 0\,.
    \end{equation*}
    We get a contradiction, so $D^\ell a(x)[v^{\ell}] = \|D^\ell a(x)\|$.
    
    Note that $2\ell !\tildec^{\dnum_{\ell}}\bfun(x)^{\frac{1}{k+\alpha}} \geq \tildec^{\dnum_k}\bfun(x)^{\frac{1}{k+\alpha}} \geq \sqrt{N}R_x$, from the choice of $\tildec$. Thus,~\eqref{eq:0907-2} and~\eqref{eq:0907-1} imply that 
    \begin{equation*}
       D^\ell a(y)[v^{\ell}] \geq \frac{1}{2}D^\ell a(x)[v^{\ell}] \geq 2\Cb \ell !\tildec^{\alpha \dnum_{\ell}}\bfun(x)^{\frac{k+\alpha-\ell }{k+\alpha}}\,.
    \end{equation*}
    On the other hand,~\eqref{eq:0907-6} implies that $D^\ell a(y)[v^{\ell}] \leq 2\Cb  \bfun(x)^{\frac{k+\alpha-\ell }{k+\alpha}}$.

    We may thus apply Lemma~\ref{lem:Muck} with $\Mbound= 2\Cb \bfun(x)^{\frac{k+\alpha-\ell }{k+\alpha}}$ and $\ve = \ell !\tildec^{\alpha \dnum_{\ell}}$ to get that
    \begin{equation*}
        \left( \dashint_{B(x, R_x)} a^{-\frac{1}{k+\alpha}}\d z \right)^{k+\alpha} \leq C\left(2\Cb  \bfun(x)^{\frac{k+\alpha-\ell }{k+\alpha}} \right)^{-1}R_x^{-\ell}\, \leq C\tildec^{-\ell}\bfun(x)^{-1}\,,
    \end{equation*}
    which means that for some constant $c = c(a, \Omega, \Omprim)$, we have
    \begin{equation}\label{eq:med-alm}
        \dashint_{B(x, R_x)} a^{-\frac{1}{k+\alpha}}\d z \leq cR_x^{-1}\,.
    \end{equation}
    Using~\eqref{eq:med-alm} in conjunction with~\eqref{eq:0707-1} and the fact that $a = \bfun \wfun$, we get
    \begin{equation}\label{eq:med}
        \dashint_{B(x, R_x)} \wfun^{-\frac{1}{k+\alpha}}\d z \leq c\bfun(x)^{\frac{1}{k+\alpha}}R_x^{-1} = c(a, \Omega, \Omprim)\,.
    \end{equation}

    {\bf Step 3: Muckenhoupt condition on large balls.} Let us take any ball $B = B(x, r) \subseteq \Omega$ such that $B$ is not included in $B(x_0, R_{x_0})$ for any $x_0 \in \Omega$. Let us denote
    \begin{equation*}
        G \coloneqq B \cap \{x: \|D^ia(x)\| \neq 0 \text{ for some $i \geq 0$} \}\,.
    \end{equation*}
    By Vitali Covering Theorem there exists a sequence $\{x_n\}_n \subset G$ such that balls $\{B(x_n, \frac{1}{5}R_{x_n})\}_n$ are pairwise disjoint and $G$ is covered by $\{B_n\}_n \coloneqq \{B(x_n, R_{x_n})\}_n$.
    Observe that
    \begin{equation}\label{eq:2507}
    |\tfrac15 B_n| \leq 2^N|\left(\tfrac15 B_n\right) \cap B| \text{ for every $n$.}
    \end{equation}
    Indeed, since $x_n \in B$ but $B$ is not included in $\tfrac{1}{5}B_n$, it holds that $B \cap \tfrac15 B_n$ includes a ball of radius $\tfrac{1}{10}R_n$, from which~\eqref{eq:2507} follows.
    
    Observe that we can apply~\eqref{eq:med} to all $B_n$, which in conjunction with~\eqref{eq:2507} gives
    \begin{align*}
        \int_{G} \wfun^{-\frac{1}{k+\alpha}}\d z \leq \sum_{n=1}^{\infty} \int_{B_n}  \wfun^{-\frac{1}{k+\alpha}} \d z \leq \sum_{n=1}^{\infty} c|B_n| \leq c5^N\sum_{n=1}^{\infty} |\tfrac{1}{5}B_n| \leq \frac{c5^N}{2^N}\sum_{n=1}^{\infty} |(\tfrac{1}{5}B_n) \cap B| \leq \frac{c5^N}{2^N}|B|\,,
    \end{align*}
    where in the last inequality we used that family $\{\tfrac{1}{5}B_n\}_{n}$ is pairwise disjoint. Therefore, as $\wfun$ is equal to $1$ outside $G$, we have
    \begin{equation*}
        \int_{B} \wfun^{-\frac{1}{k+\alpha}} \leq |B \setminus G| + c|B| \leq c|B|\,, 
    \end{equation*}
    and, as $\wfun(\cdot) \leq 1$, it holds
    \begin{equation*}
        \left(\dashint_{B} \wfun \d z\right) \left( \dashint_{B} \wfun^{-\frac{1}{k+\alpha}} \d z\right)^{k+\alpha} \leq c\,.
    \end{equation*}
\end{proof}
\begin{proof}[Proof of Theorem~\ref{theo:decomp}]
    We combine Lemma~\ref{lem:Zpart} with Proposition~\ref{prop:Muck}.    
\end{proof}
\section{Proofs of Theorems~\ref{theo:Cka},~\ref{theo:Lavdecomp} and~\ref{theo:genapprox}}\label{sec:proofs2}
\subsection{Proof of Theorem~\ref{theo:genapprox}}
The proof is essentially based on a well-established technique of convolution with squeezing, employed, for instance, in~\cite{BCFM, BCM, bgs-arma}. The proof of Theorem~\ref{theo:genapprox} is built upon the proof of~\cite[Theorem 3]{BCFM}, with the additional steps taking advantage of the Muckenhoupt part of the weight. Namely, we use the fact that the maximal operator is bounded on the weighted Sobolev space built upon a weight satisfying the Muckenhoupt condition.\newline

We shall briefly recall the results on boundedness of the maximal operator in weighted Lebesgue spaces. For every open and bounded set $\anyopen$, we denote by $M_{\anyopen}$ the maximal operator associated to the set $\anyopen$, defined as
\begin{equation*}
    M_{\anyopen} f(x) \coloneqq \sup_{B : x \in B, 3B \subset \anyopen} \dashint_{B} |f|\d y\,,
\end{equation*}
where the supremum is taken over all balls $B$ containing $x$ and such that $3B \subseteq \anyopen$.

For a given $\wfun \in A_r(\anyopen)$, $r \in (1, \infty)$, we define weighted Lebesgue space as
\begin{equation*}
    L^r(\anyopen, \wfun) \coloneqq \left\{ f : \anyopen \to \R \text{ measurable, } \int_{\Omega} |f(x)|^r\wfun(x)\d x < \infty \right\}\,,
\end{equation*}
with norm defined as
\begin{equation*}
    \|f\|_{L^r(\Omega, \wfun)} \coloneqq \left( \int_{\Omega} |f(x)|^r\wfun(x)\d x \right)^{\frac{1}{r}}\,.
\end{equation*}

From~\cite[Theorem 9]{extrap}, we know that
\begin{equation}\label{eq:maxbound}
    \wfun \in A_r(\anyopen) \implies M_{\anyopen} \text{ is bounded on } L^r(\anyopen, w)\,.
\end{equation}

In our approximation, we employ a convolution with shrinking. Let us fix an open set $\Omega \subseteq \rn$, which is star-shaped with respect to a ball $B(x_0, R)$. We fix $m \in \N$ and $\delta > 0$. Let $\rho_{\delta}$ be a standard regularizing kernel on $\rn$, i.e., $\rho_\delta(x) = \tfrac{1}{\delta^n}\rho(x/\delta)$, where $\rho \in C_c^{\infty}(\rn)$, $\supp \rho \Subset B(0, 1)$, $0 \leq \rho \leq 1$, and $\|\rho\|_{L^1} = 1$. Let us define $\kappa_{\delta} = 1 - \frac{\delta}{R}$. For any measurable function $f : \rn \to \Rm$, we define function $S_{\delta}^{\Omega}f : \rn \to \Rm$ via
\begin{equation*}
    S_{\delta}^{\Omega}f(x) \coloneqq \int_{B(x, \delta)} \rho_{\delta}(x-y)f \left(x_0 + \frac{y-x_0}{\kappa_\delta} \right)\d y = \int_{B_{\delta}(0)} \rho_{\delta}(y)f\left(x_0 + \frac{x - y - x_0}{\kappa_{\delta}} \right)\d y\,.
\end{equation*}
By elementary calculations, if $f$ vanishes outside $\Omega$, then $S_{\delta}^{\Omega} f \in C_c^{\infty}(\Omega)$ for $\delta \in (0, R/4)$. Moreover, for any $u \in W^{1,1}_0(\Omega)$, we have
\begin{equation*}
    \nabla S_{\delta}^{\Omega} u = \frac{1}{\kappa_{\delta}}S_{\delta}^{\Omega}(\nabla u)\,.
\end{equation*}
We shall use the following properties of $S_{\delta}^{\Omega}$.
\begin{lemma}[Lemma~3.1 in~\cite{BCM}]\label{lem:Sd-1}
    If $f \in L^1(\Omega)$, then $S_{\delta}f$ converges to $f$ in $L^1(\Omega)$, and so in measure, as $\delta \to 0$. 
\end{lemma}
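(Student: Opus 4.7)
The plan is to decompose the error $S_\delta^\Omega f - f$ into a piece controlled by standard mollifier theory and a piece controlled by continuity of dilations in $L^1$. Extending $f$ by zero outside $\Omega$, introduce the rescaled function $f_\delta(y) \coloneqq f\!\left(x_0 + (y-x_0)/\kappa_\delta\right)$ and note that, by Fubini's definition of convolution, $S_\delta^\Omega f = \rho_\delta * f_\delta$. Then I would write
\[
    S_\delta^\Omega f - f = \rho_\delta * (f_\delta - f) + (\rho_\delta * f - f).
\]

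The second term tends to zero in $L^1(\Omega)$ as $\delta \to 0$ by the classical approximation-to-identity property of the mollifier $\rho_\delta$. For the first term, Young's convolution inequality gives
\[
    \|\rho_\delta * (f_\delta - f)\|_{L^1(\rn)} \leq \|\rho_\delta\|_{L^1(\rn)} \|f_\delta - f\|_{L^1(\rn)} = \|f_\delta - f\|_{L^1(\rn)},
\]
so the task reduces to showing $\|f_\delta - f\|_{L^1(\rn)} \to 0$ as $\delta \to 0$ (equivalently $\kappa_\delta \to 1$).

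This last step is a standard $3\varepsilon$ argument. Using density of $C_c(\rn)$ in $L^1(\rn)$, I would approximate $f$ by a continuous compactly supported $g$, estimate
\[
    \|f_\delta - g_\delta\|_{L^1(\rn)} = \kappa_\delta^{N}\|f - g\|_{L^1(\rn)}
\]
via the change of variables $z = x_0 + (y-x_0)/\kappa_\delta$ (contributing the Jacobian $\kappa_\delta^{N}$), and then invoke uniform continuity of $g$ together with the uniform bound $\supp g_\delta \subseteq \supp g + B(0,R)$ (valid for $\delta$ small enough) to conclude that $g_\delta \to g$ in $L^1(\rn)$. Combining these controls yields $L^1(\Omega)$-convergence of $S_\delta^\Omega f$ to $f$; convergence in measure is then an immediate consequence.

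I do not anticipate any serious obstacle here, as the argument is entirely classical. The only minor care needed is the zero extension of $f$ beyond $\Omega$ (so that $\rho_\delta * f$ and $f_\delta$ are globally defined) and keeping track of the Jacobian $\kappa_\delta^{N}$ that appears in the dilation change of variables; both are essentially bookkeeping and introduce no new difficulty.
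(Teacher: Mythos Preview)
Your argument is correct and entirely standard: the decomposition $S_\delta^\Omega f - f = \rho_\delta * (f_\delta - f) + (\rho_\delta * f - f)$, Young's inequality, and the $3\varepsilon$ argument for continuity of dilations in $L^1$ all go through as you describe. Note that the paper does not actually prove this lemma --- it is quoted verbatim as Lemma~3.1 of \cite{BCM} and used as a black box --- so there is no in-paper proof to compare against; your write-up simply supplies the classical details that the cited reference would contain.
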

\begin{lemma}[Lemma~3.3 in~\cite{BCM}]\label{lem:Sd-2}
Let $u \in W_0^{1, 1}(\Omega)$.
\begin{enumerate}[i)]
    \item If $u \in L^\infty(\Omega)$, then
    \begin{equation}\label{eq:inqLinf}
        \|\nabla S_{\delta}u\|_{L^{\infty}} \leq \delta^{-1}\|u\|_{L^{\infty}}\|\nabla \rho\|_{L^1}\,.
    \end{equation}
    \item If $u \in C^{0, \gamma}(\Omega)$, then
    \begin{equation}\label{eq:Cgamma}
        \|\nabla S_{\delta}u\|_{L^{\infty}} \leq \frac{\delta^{\gamma-1}}{\kappa_{\delta}^{\gamma}}[u]_{C^{0, \gamma}}\|\nabla \rho\|_{L^1}\,.
    \end{equation}
\end{enumerate}
\end{lemma}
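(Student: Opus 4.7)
The plan is to differentiate $S_\delta^\Omega u$ directly, using the first representation in the excerpt, namely $S_\delta^\Omega u(x) = \int_{B(x,\delta)} \rho_\delta(x-y)\, u\!\left(x_0 + (y-x_0)/\kappa_\delta\right)\,\d y$, where the entire dependence on $x$ sits in the mollifier. Since $\rho_\delta \in C_c^\infty(B(0,\delta))$, differentiation under the integral sign is justified by dominated convergence, and gives
\begin{equation*}
\nabla S_\delta^\Omega u(x) = \int \nabla \rho_\delta(x-y)\, u\!\left(x_0 + (y-x_0)/\kappa_\delta\right)\,\d y.
\end{equation*}
The scaling identity $\|\nabla \rho_\delta\|_{L^1} = \delta^{-1}\|\nabla \rho\|_{L^1}$, obtained by the substitution $w = z/\delta$, is the only quantitative ingredient needed.

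For part i), I would simply pull $\|u\|_{L^\infty}$ out of the integral and bound the remaining $L^1$-norm of $\nabla \rho_\delta$; this gives $|\nabla S_\delta^\Omega u(x)| \leq \|u\|_{L^\infty}\, \delta^{-1}\|\nabla \rho\|_{L^1}$, which is exactly \eqref{eq:inqLinf}. This step is routine and uses no cancellation.

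For part ii), the key observation is that $\int \nabla \rho_\delta(z)\,\d z = 0$ by the compact support of $\rho_\delta$, so one may subtract the constant $u\!\left(x_0 + (x-x_0)/\kappa_\delta\right)$ inside the integral without changing its value:
\begin{equation*}
\nabla S_\delta^\Omega u(x) = \int \nabla \rho_\delta(x-y)\Bigl[u\!\left(x_0 + (y-x_0)/\kappa_\delta\right) - u\!\left(x_0 + (x-x_0)/\kappa_\delta\right)\Bigr]\,\d y.
\end{equation*}
The two arguments of $u$ differ by $(y-x)/\kappa_\delta$, so the H\"older hypothesis yields a pointwise bound of $[u]_{C^{0,\gamma}}|x-y|^\gamma/\kappa_\delta^\gamma$. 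On the support of $\nabla \rho_\delta(x - \cdot\,)$ one has $|x-y|\leq \delta$, and combining this with $\|\nabla \rho_\delta\|_{L^1} = \delta^{-1}\|\nabla \rho\|_{L^1}$ produces exactly the bound $\delta^{\gamma-1}\kappa_\delta^{-\gamma}[u]_{C^{0,\gamma}}\|\nabla \rho\|_{L^1}$ claimed in \eqref{eq:Cgamma}.

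There is no serious obstacle here: the proof is essentially the classical first-order-cancellation trick for mollifier estimates on H\"older functions, adapted to the squeezed mollifier $S_\delta^\Omega$, with the factor $\kappa_\delta^{-\gamma}$ tracking the Lipschitz constant of the affine map $y \mapsto x_0 + (y-x_0)/\kappa_\delta$. The only point to verify is that $x_0 + (y-x_0)/\kappa_\delta$ lands in $\Omega$ whenever $|x-y|\leq \delta$ and $x$ lies in a neighbourhood of $\overline{\Omega}$; this is guaranteed by the star-shapedness of $\Omega$ with respect to $B(x_0,R)$ and the restriction $\delta \in (0, R/4)$ already imposed in the setup preceding the lemma.
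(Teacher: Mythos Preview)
The paper does not supply a proof of this lemma; it is quoted verbatim as Lemma~3.3 of \cite{BCM} and used as a black box. Your argument is correct and is the standard derivation: differentiate the convolution representation in which all $x$-dependence sits in $\rho_\delta(x-y)$, use the scaling $\|\nabla\rho_\delta\|_{L^1}=\delta^{-1}\|\nabla\rho\|_{L^1}$, and for part~ii) exploit the mean-zero property $\int\nabla\rho_\delta=0$ to subtract the constant $u\bigl(x_0+(x-x_0)/\kappa_\delta\bigr)$ before invoking H\"older continuity. There is nothing to add.
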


We are now in a position to prove Theorem~\ref{theo:genapprox}.
\begin{proof}[Proof of Theorem~\ref{theo:genapprox}]
Given any $u \in W^{1, 1}_0(\Omega)$, one can use a standard truncation argument to approximate $u$ by a sequence $\{u_n\}_n \subset W^{1, 1}_0(\Omega) \cap L^{\infty}(\Omega)$ such that~\eqref{eq:theo2} holds. Indeed, it is sufficient to combine the Lebesgue Monotone Theorem with the fact that the gradient of the truncation at every point coincides with either $0$ or the gradient of the original function. With the use of diagonal argument, we can thus assume that $u \in L^{\infty}(\Omega)$ from now on.

Let us assume that $\Omega$ is a star-shaped domain with respect to the ball $B_R(0)$ and denote $u_{\delta} \coloneqq S_{\delta}^{\Omega}u$. By properties of $S_{\delta}^{\Omega}$, $u_{\delta} \in W^{1, \infty}_0(\Omega)$ for sufficiently small $\delta$. From Lemma~\ref{lem:Sd-1}, we also have that $\|u_{\delta} - u\|_{W^{1, 1}(\Omega)} \xrightarrow{\delta \to 0} 0$. We shall moreover prove that
\begin{equation*}
    \left\{ |\nabla u_{\delta}|^p + a|\nabla u_{\delta}|^q \right\}_{\delta} \text{ is equiintegrable.}
\end{equation*}
We shall work on both cases i) and ii) simultaneously. In case of i), we shall denote $\gamma = 0$ and momentarily also assume that $p \leq N$. Consequently, in both cases, the condition on $p$ and $q$ reads $q \leq p + \frac{k+\alpha}{1-\gamma}$. We further denote $\vartheta \coloneqq (q-p)(1-\gamma)$ and $\tau \coloneqq \frac{2 + \diam \Omega}{R}$. Then, for any $\delta \leq \tfrac{R}{2}$, $x \in \Omega$ and $y \in B_{\delta}(0)$, we have
\begin{equation}\label{eq:1007-1}
    \left| \frac{x-y}{\kappa_{\delta}} - x \right| \leq \frac{|y|}{\kappa_{\delta}} + \frac{|x|(1-\kappa_{\delta})}{\kappa_{\delta}} \leq \frac{2\delta}{R} + \frac{(\diam \Omega)\delta}{R} = \tau\delta\,. 
\end{equation}
Using that $\bfun \in \SP^{\vartheta}(\Omega)$, we have that there exists a constant $\Cb $ such that whenever $|x-y| \leq \tau\delta$, it holds
\begin{equation}\label{eq:1007-2}
    \bfun(x) \leq C_{\bfun}(\bfun(y) + \tau^{\vartheta} \delta^{\vartheta})\,.
\end{equation}
Let us now fix any $x \in \Omega$ and $\delta \leq \tfrac{R}{2}$. Let
\begin{equation*}
    \bfun^-_{\delta} \coloneqq \inf_{y \in B_{\delta}(0)} \bfun \left( \frac{x-y}{\kappa_{\delta}} + x \right)\,.
\end{equation*}
By~\eqref{eq:1007-1} and~\eqref{eq:1007-2}, we have
\begin{equation*}
    \bfun(x) \leq \Cb (\bfun^-_{\delta} + \tau^{\vartheta}\delta^{\vartheta})\,.
\end{equation*}
Therefore, we can estimate
\begin{equation}\label{eq:1007-7}
    a(x)|\nabla u_{\delta}(x)|^q = \bfun(x)\wfun(x)|\nabla u_{\delta}(x)|^q \leq \bfun^-_{\delta}\wfun(x)|\nabla u_{\delta}(x)|^q + \|\wfun\|_{L^{\infty}(\Omega)}\tau^{\vartheta}\delta^{\vartheta}|\nabla u_{\delta}(x)|^q\,.
\end{equation}
By Lemma~\ref{lem:Sd-2}, there exists a constant $c$, depending only on $\rho$ and $u$, such that
\begin{equation*}
    \|\wfun\|_{L^{\infty}(\Omega)}\tau^{\vartheta}\delta^{\vartheta}|\nabla u_{\delta}(x)|^q \leq \|\wfun\|_{L^{\infty}}\tau^{\vartheta}\delta^{\vartheta}\|\nabla u_{\delta}\|_{L^{\infty}}^{q-p} |\nabla u_{\delta}(x)|^p \leq c\tau^{\vartheta} |\nabla u_{\delta}(x)|^p\,.
\end{equation*}
Note that, by Jensen's inequality, it holds
\begin{equation}\label{eq:1007-9}
    |\nabla u_{\delta}(x)|^p = \left( \int_{B_{\delta}(0)} \left| \nabla u \left( \frac{x - y}{\kappa_{\delta}} \right) \right|\rho_{\delta}(y)\d y \right)^p \leq \int_{B_{\delta}(0)} \left| \nabla u \left( \frac{x - y}{\kappa_{\delta}} \right) \right|^p\rho_{\delta}(y)\d y = S_{\delta}^{\Omega}(|\nabla u|^p)\,.
\end{equation}
As $\cF[u] < \infty$, we have $|\nabla u|^p \in L^1(\Omega)$, so by Lemma~\ref{lem:Sd-1} and Vitali Convergence Theorem, we get equiintegrability of the family $\{S_{\delta}^{\Omega}(|\nabla u|^p)\}_{\delta}$. By~\eqref{eq:1007-9}, we have that
\begin{equation}\label{eq:1007-5}
    \text{the family } \left\{ |\nabla u_{\delta}|^p \right\}_{\delta} \text{ is equiintegrable.}
\end{equation}
On the other hand, by the definition of $u_{\delta}$ and $\bfun^-_{\delta}$, we have
\begin{align}\label{eq:1007-4}
    \bfun^-_{\delta}\wfun(x)|\nabla u_{\delta}(x)|^q &\leq \bfun^-_{\delta}\wfun(x)\left(\int_{B_{\delta}(0)} \left| \nabla u \left( \frac{x-y}{\kappa_{\delta}} \right)\right|\rho_{\delta}(y)\d y\right)^q \nonumber \\
    &\leq \wfun(x)\left(\int_{B_{\delta}(0)} \bfun^{\frac{1}{q}}\left(\frac{x-y}{\kappa_{\delta}} \right)\left| \nabla u \left( \frac{x-y}{\kappa_{\delta}} \right)\right|\rho_{\delta}(y)\d y\right)^q = \wfun(x) \left(S_{\delta}^{\Omega}\left(\bfun^{\frac{1}{q}}|\nabla u|\right)(x)\right)^q\,.
\end{align}
Let us denote $\bu \coloneqq \bfun^{1/q}|\nabla u|$. From the definition of $S_{\delta}^{\Omega}$ and $\rho_{\delta} \leq \delta^{-N}$, we get
\begin{align}\label{eq:1107-1}
    S_{\delta}^{\Omega}f(x) &= \int_{B(x, \delta)} \rho_{\delta}(x-y)f \left(\frac{y}{\kappa_\delta} \right)\d y \leq \frac{1}{\delta^N}\int_{B(x, \delta)} f \left(\frac{y}{\kappa_\delta} \right)\d y = \frac{\kappa_{\delta}^N}{\delta^N} \int_{B(x/\kappa_{\delta}, \delta/\kappa_{\delta})} f(y)\d y\,.
\end{align}
From~\eqref{eq:1007-1}, we infer that $|x - x/\kappa_{\delta}| \leq \tau\delta$ for sufficiently small $\delta$. Thus, for sufficiently small $\delta$, we have
\begin{equation*}
    B\left( \frac{x}{\kappa_{\delta}}, \frac{\delta}{\kappa_{\delta}} \right) \subseteq B\left( \frac{x}{\kappa_{\delta}}, 2\delta \right) \subseteq B\left(x, (\tau + 2)\delta \right)\,.
\end{equation*}
In turn, by~\eqref{eq:1107-1}, we have
\begin{equation}\label{eq:1107-2}
    S_{\delta}^{\Omega}f(x) \leq \frac{\kappa_{\delta}^N}{\delta^N} \int_{B(x, (\tau + 2)\delta)} f(y)\d y \leq\frac{2^N(\tau + 2)^N}{|B_1|} \dashint_{B(x, (\tau + 2)\delta)} f(y)\d y
\end{equation}

Recall that $\wfun \in A_q(\Uset)$ and $\Omega \Subset \Uset$. Taking any $\delta$ such that~\eqref{eq:1107-2} holds and additionally $\delta \leq \frac{3\dist(\Omega, \partial \Uset)}{\tau + 2}$, we get
\begin{equation*}
    S_{\delta}^{\Omega}f(x) \leq cM_{\Uset}f(x)\,, 
\end{equation*}
where $c = \frac{2^N(\tau + 2)^N}{|B_1|}$. Note that by~\eqref{eq:maxbound} we have that $\wfun (M_{\Uset}f)^q \in L^1(\Omega)$. From~\eqref{eq:1007-4}, we obtain that
\begin{equation}\label{eq:1007-6}
    \text{the family } \left\{ \bfun^-_{\delta}\wfun|\nabla u_{\delta}| \right\}_{\delta} \text{ is equiintegrable.}
\end{equation}
From the estimate~\eqref{eq:1007-7} and lines~\eqref{eq:1007-5} and~\eqref{eq:1007-6}, we get that the family $\{|\nabla u_{\delta}|^p + a|\nabla u_{\delta}|^q\}_\delta$ is equiintegrable. By Lemma~\ref{lem:Sd-1}, we also have that $\nabla u_{\delta}$ converges to $\nabla u$ in measure, which by Vitali Convegence Theorem means that $\cF[u_{\delta}] \xrightarrow{\delta \to 0} \cF[u]$. This finishes the proof in the case of $\Omega$ being a star-shaped domain with respect to a ball centred in $0$ and, in the case i), with the additional assumption that $p \leq N$.

If one starts from $\Omega$, which is a star-shaped domain with respect to the ball not centred in $0$, one can simply translate the whole problem so that $\Omega$ is star-shaped with respect to a ball centred in $0$. \newline

Now we shall focus on the case of $\Omega$ being an arbitrary bounded Lipschitz domain. From~\cite[Lemma~8.2]{C-b}, we infer that set $\overline{\Omega}$ can be covered by a finite family of sets $\{U_i\}_{i=1}^{m}$ such that each $\Omega_i :=\Omega\cap U_i$ is a star-shaped domain with respect to some ball, for all $i = 1, \dots, m$. Then $\Omega=\bigcup_{i=1}^{m}\Omega_i\,$.    By~\cite[Proposition 2.3, Chapter 1]{necas}, there exists the partition of unity related to the partition $\{U_i\}_{i=1}^{m}$, i.e., the family $\{\theta_i\}_{i=1}^{m}$ such that
\begin{equation*}
0\le\theta_i\le 1,\quad\theta_i\in C^\infty_c(U_i),\quad \sum_{i=1}^{m}\theta_i(x)=1\ \ \text{for}\ \ x\in\Omega\,.
\end{equation*}
By the previous paragraph, for every $i = 1, 2, \dots, m$, as $\Omega_i$ is a star-shaped domain with respect to some ball, $\cF[\theta_i u;\Omega_i] < \infty$, and $\theta_i u \in W^{1, 1}_0(\Omega)$, there exist a sequence $\{u_{\delta}^i\}_{\delta}$ such that $\|u_{\delta}^i - \theta_i u\|_{W^{1, 1}(\Omega)} \xrightarrow{\delta \to 0} 0$ and $\cF[u_{\delta}^i; \Omega] \xrightarrow{\delta \to 0} \cF[\theta_iu;\Omega]$. Let us now consider the sequence $(I_\delta)_\delta$ defined as
\begin{equation*}I_{\delta} \coloneqq \sum_{i=1}^{m} u^i_{\delta}\,.
\end{equation*}
As we have that $u_{\delta}^i \to \theta_iu$ in $W^{1, 1}(\Omega)$ for every $i$, we have $I_{\delta} \to u$ in $W^{1, 1}(\Omega)$. Since the sequence $(\nabla  u_{\delta}^i)_\delta$ converges to $\nabla (\theta_i u)$ in measure and $\sum_{i=1}^{m} \nabla(\theta_i u) = \nabla u$, it holds that
\begin{equation}\label{eq:jan9}
    \left(\nabla I_{\delta}\right)_{\delta} \xrightarrow{\delta \to 0} %\text{ converges to }
    \nabla u \text{ in measure.}
\end{equation}
Moreover, for any $x \in \Omega$ we have that
\begin{align}\label{eq:jan11}
    \left| \nabla I_{\delta}(x) \right|^p + a(x)\left| \nabla I_{\delta}(x) \right|^q &\leq \sum_{i=1}^{m} \left(m^{p-1}|\nabla(u_{\delta}^i)(x)|^p + m^{q-1}a(x)|\nabla(u_{\delta}^i)(x)|^q\right)\nonumber\\
    &\leq m^{q-1}\sum_{i=1}^{m}  \left(|\nabla(u_{\delta}^i)(x)|^p + a(x)|\nabla(u_{\delta}^i)(x)|^q\right)\,.
\end{align}
As for all $i = 1, 2, \dots, m$, we have that $\cF[u^i_{\delta}]$ converges, it holds that the family\\ $\{|\nabla(u_{\delta}^i)|^p + a|\nabla(u_{\delta}^i)|^q\}_{\delta}$ 
is equiintegrable. Therefore, the estimate~\eqref{eq:jan11} gives us that\[\text{ the family $\quad\left\{ \left| \sum_{i=1}^{m} \nabla u_{\delta}^i\right|^p + a \left| \sum_{i=1}^{m} \nabla u_{\delta}^i\right|^q\right \}_{\delta}\quad$ is equiintegrable. } \]
This, together with~\eqref{eq:jan9} and Vitali Convergence Theorem, as well as the fact that  $I_{\delta} \to u$ in $W^{1, 1}(\Omega)$, gives us the result for an arbitrary bounded Lipschitz domain $\Omega$. \newline

To drop the assumption that $p \leq N$ in the case i), we observe that whenever $p > N$, then $\cF[u] < \infty$ implies $u \in W^{1, p}(\Omega)$, which further means that $u \in C^{0, 1 - \frac{N}{p}}(\Omega)$. Already proven case ii) thus gives us~\eqref{eq:theo2} in case of $q \leq p + \frac{\vk p}{N}$, which gives Assertion i).

\end{proof}
\subsection{Proof of Theorems~\ref{theo:Cka} and~\ref{theo:Lavdecomp}}
\begin{proof}[Proof of Theorem~\ref{theo:Lavdecomp}]
    Let $\{u_n\}_n \subset W^{1, 1}_0(\Omega)$ be a sequence satisfying
    \begin{equation*}
        \cF[u_n + u_0;\Omega] \xrightarrow{n \to \infty} \inf_{u \in u_0 + W^{1, 1}_0(\Omega)} \cF[u;\Omega]\,.
    \end{equation*}
    Let us fix $n \in \N$. Note that
    \begin{equation*}
        \cF[u_n;\Omega] \leq 2^q\left(\cF[u_n + u_0;\Omega] + \cF[u_0;\Omega] \right) < \infty\,.
    \end{equation*}
    Thus, we may apply Theorem~\ref{theo:genapprox} to find a sequence $\{\varphi_{n, k}\}_k \subset C_c^{\infty}(\Omega)$ such that
    \begin{equation*}
        \cF[\varphi_{n, k};\Omega] \xrightarrow{k \to \infty} \cF[u_n;\Omega] \quad \text{ and } \quad \|\varphi_{n, k} - u_n\|_{W^{1, 1}(\Omega)} \xrightarrow{k \to \infty} 0\,.
    \end{equation*}
    In particular, the sequence $\{|\nabla \varphi_{n, k}|^p + a|\nabla \varphi_{n, k}|^q\}_k$ is equiintegrable. As we have
    \begin{equation*}
        |\nabla (\varphi_{n, k} + u_0)|^p + a|\nabla (\varphi_{n, k} + u_0)|^q \leq 2^q \left( |\nabla \varphi_{n, k}|^p + a|\nabla \varphi_{n, k}|^q + |\nabla u_0|^p + a|\nabla u_0|^q \right)\,,
    \end{equation*}
    the sequence $\{|\nabla (\varphi_{n, k} + u_0)|^p + a|\nabla (\varphi_{n, k} + u_0)|^q\}_k$ is also equiintegrable. Moreover, it converges in measure to $|\nabla (u_n + u_0)|^p + a|\nabla (u_n + u_0)|^q$. This means that
    \begin{equation*}
        \cF[\varphi_{n, k} + u_0;\Omega] \xrightarrow{k \to \infty} \cF[u_n + u_0;\Omega]\,
    \end{equation*}
    Therefore,
    \begin{equation*}
        \inf_{u \in u_0 + W^{1, 1}_0(\Omega)} \cF[u;\Omega] \leq\inf_{u \in u_0 + C_c^{\infty}(\Omega)} \cF[u;\Omega] \leq \inf_{n, k} \cF[\varphi_{n, k} + u_0;\Omega] \leq \inf_{n} \cF[u_n + u_0;\Omega] \leq \inf_{u \in u_0 + W^{1, 1}_0(\Omega)} \cF[u;\Omega]\,,
    \end{equation*}
    which finishes the proof.
\end{proof}
\begin{proof}[Proof of Theorem~\ref{theo:Cka}]
    Let $q = p + (\ell + \beta)\max(1, p/N)$, where $\ell \in \N$ and $\beta \in (0, 1]$. From assumption $q \leq p + (k + \alpha)\max(1, p/N)$, we have that either $\ell < k$ or $\ell = k$ and $\beta \leq \alpha$. In particular, $a \in C^{\ell, \beta}(\Uset)$. 

    Let $\Omprim$ be any open set such that $\Omega \Subset \Omprim \Subset \Uset$. We apply Theorem~\ref{theo:decomp} to function $a \in C^{\ell, \beta}(\Uset)$ and $\Omprim$, obtaining functions $\bfun \in \mathcal{Z}^{\ell+ \beta}(\Omprim)$ and $\wfun \in A_{\ell+ 1 + \beta}(\Omprim)$. Note that $q \geq p + \ell+ \beta \geq \ell+ 1 + \beta$, so $\wfun \in A_q(\Omprim)$, as $A_{\ell+ 1 + \beta}(\Omprim) \subseteq A_q(\Omprim)$. Applying Theorem~\ref{theo:Lavdecomp}, we obtain the conclusion. 
\end{proof}
%
%\section*{Appendix}
%\renewcommand\thesection{\Alph{section}}
%\renewcommand{\theHsection}{\Alph{section}}
%\setcounter{section}{0}
%\setcounter{proposition}{0}
%\section{Supplementary lemmas}
%
\section*{Acknowledgement}
The author would like to thank Iwona Chlebicka (University of Warsaw) for several suggestions on how to improve the overall quality of the paper.
\printbibliography
\end{document}